\def\namedlabel#1#2{\begingroup
    #2%
    \def\@currentlabel{#2}%
    \phantomsection\label{#1}\endgroup}
\theoremstyle{plain}
\newtheorem{theorem}{Theorem}[section]
\newtheorem{proposition}{Proposition}[section]
\newtheorem{lemma}{Lemma}[section]
\newtheorem{example}{Example}[section]
\theoremstyle{definition}
\newtheorem{definition}{Definition}[section]
\newtheorem{remark}{Remark}[section]
\newenvironment{claim}[2]{\par\noindent\emph{Claim #1:}\space#2}{}
\newenvironment{claimproof}[2]{\par\noindent\emph{Proof of Claim #1:}\space#2}{}
\renewcommand{\H}{\mathcal{H}}
\let\varepsilon\varepsilon
\newcommand{\gph}{\operatorname{gph}}
\journal{J. Differential Equations}
\begin{document}

\begin{frontmatter}

\title{Galerkin-like Method for Integro-Differential Inclusions with applications to Volterra Sweeping processes} 

\author[mymainaddress]{Pedro P\'erez-Aros}
\ead{pperez@dim.uchile.cl}
\author[mytertiaryaddress]{Manuel Torres-Valdebenito}
\ead{manuel.torres@ug.uchile.cl}

\author[mythirdaddress]{Emilio Vilches\corref{mycorrespondingauthor}}
\cortext[mycorrespondingauthor]{Corresponding author}
\ead{emilio.vilches@uoh.cl}

\address[mymainaddress]{Departamento de Ingeniería Matemática and Centro de Modelamiento Matemático  (CNRS UMI 2807), Universidad de Chile, Santiago, Chile}
\address[mysecondaryaddress]{Instituto de Ciencias de la Ingeniería, Universidad de O'Higgins, Rancagua, Chile}
\address[mytertiaryaddress]{Departamento de Ingenier\'ia Matem\'atica, Universidad de Chile, Santiago, Chile}

\begin{abstract}
In this paper,  we develop the Galerkin-like method to address first-order integro-differential inclusions. Under compactness or monotonicity conditions, we obtain new results for the existence of solutions for this class of problems, which generalize existing results in the literature and provide new insights for differential inclusions with an unbounded right-hand side. The effectiveness of the proposed approach is illustrated by presenting new existence results for nonconvex state-dependent Volterra sweeping processes, where the right-hand side is unbounded, and the classical theory of differential inclusions is not applicable. This is the first result of its kind.  

The paper concludes with an application to the existence of an optimal control problem governed by nonconvex state-dependent  Volterra sweeping processes in finite dimensions.
\end{abstract}



\begin{keyword}
Integro-differential  inclusions \sep Galerkin-like method\sep sweeping process \sep  subsmooth sets \sep normal cone \sep measure of non-compactness


\MSC[2010]  34A60 \sep 49J52 \sep 34G25 \sep 49J53 

\end{keyword}

\end{frontmatter}



\section{Introduction}
First-order differential inclusions provide a general framework for studying dynamical processes, where the velocity belongs to a set that depends on time and state. It constitutes an instantaneous description of the velocity, for which there is a well-developed theory.  We refer to \cite{MR755330,MR1189795} for an overview of the subject. However, this instantaneous description does not cover phenomena where the history of the process affects the description of the system. In this paper, we introduce a class of first-order integro-differential inclusions, where the velocity depends on an integral term, which can be understood as a history operator or a given description of the acceleration.  We develop the Galerkin-like method, introduced in \cite{JV-Galerkin}, to deal with this class of integro-differential inclusions in separable Hilbert spaces.  We show that this class generalizes first-order differential equations (see, e.g., \cite{MR755330,MR1189795}) and is general enough to include the state-dependent sweeping process perturbed by an integral term, which has applications in electrical circuits, contact mechanics and crowd motion, among other areas (see, e.g., \cite{bouach2021optimal,bouach2021nonconvex}). Moreover, we provide an existence result for an optimal control problem involving the aforementioned class of integro-differential inclusions in finite-dimensional spaces.

Let $\H$ be a separable Hilbert space and $I=[0,T]$ a nonempty interval.  The first part of this paper aims to study the following integro-differential inclusion:
\begin{equation}\label{diff-inc}
\left\{
\begin{aligned}
\dot{x}(t)&\in F(t,x(t))+\int_{0}^t g(t,s,x(s))ds & \textrm{ a.e }  t\in [0,T],\\
x(0)&=x_0.
\end{aligned}
\right.
\end{equation}
To prove the existence of solutions for \eqref{diff-inc}, following the ideas from \cite{JV-Galerkin}, we approach the latter problem by projecting the state, but not the velocity, into a  {finite-dimensional} Hilbert space. Indeed, for each $n\in \mathbb{N}$ we approach \eqref{diff-inc} by the following integro-differential inclusion:
\begin{equation*}
\left\{
\begin{aligned}
\dot{x}(t)&\in F(t,P_n(x(t)))+\int_{0}^t g(t,s,P_n(x(s)))ds & \textrm{ a.e }  t\in [0,T],\\
x(0)&=P_n(x_0),
\end{aligned}
\right.
\end{equation*}
where, given an orthonormal basis $(e_n)_{n\in \mathbb{N}}$ of $\H$ and $P_n$ is the projector from $\H$ into the linear span of $\left\{ e_1,\ldots,e_n\right\}$. By means of a fixed point argument, we prove an \emph{Approximation Principle} which states that, without any compactness or monotonicity conditions, the above problem has a solution (see Theorem \ref{existencia-finito}).  Hence, the Approximation Principle provides a starting point for the approximation of differential inclusions in infinite-dimensional spaces. Once the existence of approximate solutions has been established, the idea is to pass to the limit in the differential inclusion. To do this, we provide a Compactness Principle (see Theorem \ref{main-compactness}), which establishes that whenever the trajectories are relatively compact, it is possible to ensure the existence of solutions. The Compactness Principle is used later to establish the existence of solutions for state-dependent sweeping processes (see Section \ref{Sweeping-sec}).  Finally, we show that our approach allows us to recover classical results from differential inclusions with compactness and monotonicity assumptions (See Subsections \ref{subsection-compactness} and \ref{subsection-Lipschitz}).

 The method described above is called the \emph{Galerkin-like method}, and it was introduced in \cite{JV-Galerkin}. Hence, our results extend those obtained in  \cite{JV-Galerkin}. Moreover, under compactness or monotonicity conditions, we get new results for the existence of solutions for this class of problems, which generalize existing results in the literature and provide new insights for differential inclusions with an unbounded right-hand side (see Theorem \ref{main} and Theorem \ref{main-Lipschitz}, respectively).

The second part of this paper aims to study state-dependent sweeping processes perturbed by an integral term.  The sweeping process is a first-differential inclusion involving normal cones to moving sets. It was introduced by J.J. Moreau in a series of papers to model a problem in elasto-plasticity (see \cite{MO1,MO2,MO4,Moreau1999}). Sweeping processes with integral terms have been recently considered. We can mention \cite{MR3039208}, where the authors model the movement of sticky particles via sweeping processes with an integral term.  Then, in \cite{MR4099068}, the authors use the Moreau-Yosida regularization to obtain the existence of solutions. Next, in \cite{bouach2021nonconvex}, the authors prove the existence of solutions for sweeping processes with an integral term through a numerical method. Finally, in \cite{Vilches-2024}, the author use a fixed-point argument to obtain the existence of solutions in the prox-regular case. We refer to \cite{JV-alpha,JV-Galerkin,JV-regular,MR4421900,MR3813128,MR3956966} for existence results without an integral term.
In this second part, and under mild assumptions, we use the results of the first part of the paper and the reduction technique proposed in \cite{Thibault2003} to obtain the existence of solutions.

The third part of this paper studies the existence of solutions for an optimal control problem governed by a nonconvex state-dependent integro-differential sweeping process where the control acts in the integral term.

The paper is organized as follows. After some mathematical preliminaries, in Section \ref{hipo-sol}, we gather the hypotheses used in the paper and provide some technical lemmas used to prove the main result. Then, in Section \ref{partialGalerkin}, through a fixed point argument, we prove the existence of solutions for the integro-differential inclusion \eqref{diff-inc}.   Section \ref{Sweeping-sec} and \ref{Reduction-sp} provide the well-posedness and a reduction technique for state-dependent Volterra sweeping processes. Finally, Section \ref{Control-problem} shows the existence of solutions for a related optimal control problem governed by a nonconvex state-dependent Volterra sweeping process in finite dimensions.

\section{Mathematical preliminaries}

From now on, $\H$ denotes a separable Hilbert space whose norm is denoted by $\Vert \cdot \Vert$. The closed unit ball is denoted by $\mathbb{B}$.  The closure of a set $A\subset \H$ is denoted by $\overline{A}$.
The notation $\H_w$ stands for $\H$ equipped with the weak topology, and $x_n \rightharpoonup x$ denotes the weak convergence of $(x_n)_n$ to $x$.

\subsection{Elements of differential inclusions}
We denote by $L^1\left([0,T];\H\right)$ the space of $\H$-valued Lebesgue integrable functions defined over the interval $[0,T]$. We write $L^1_w\left([0,T];\H\right)$ to mean the space $L^1\left([0,T];\H\right)$  endowed with the weak topology. Moreover, we say that $u\in \operatorname{AC}\left([0,T];\H\right)$ if there exists $f\in L^1\left([0,T];\H\right)$ and $u_0\in \H$ such that $u(t)=u_0+\int_{0}^t f(s)ds$ for  $t\in[0,T]$. 

The following result, proved in \cite{Vilches-2024}, is an enhanced version of classical Gronwall's lemma. \begin{lemma}\label{Gronwall-2}
Let $I=[0,T]$ and $u\colon I \to \mathbb{R}$ be a nonnegative essentially bounded measurable function.  Let $K_1, K_2$ and $K_3$ be nonnegative integrable functions. If for all $t\in I$ 
$$
u(t)\leq u(0)+\int_{0}^t K_1(s)ds+\int_{0}^t  K_2(s)u(s)ds+\int_{0}^t \int_{0}^s K_3(s,\tau)u(\tau)d\tau ds.
$$
Then, for all $t\in I$, one has
$$
u(t)\leq u(0)\exp\left(\int_0^t \upsilon(s)ds\right)+\int_{0}^t K_1(s)\exp\left(\int_s^t \upsilon(\tau)d\tau\right)ds.
$$
where $\upsilon(t):= K_2(t)+\int_{0}^t K_3(t,s)ds$ for $t\in I$.
\end{lemma}

\begin{proof}
    Let us consider the following non-decreasing function
    $$
        \vartheta(t):=u(0)+\int_{0}^t K_1(s)ds+\int_{0}^t K_2(s)u(s)ds+\int_{0}^t \int_{0}^s K_3(s,\tau)u(\tau)d\tau ds.
    $$
    Then, for a.e. $t\in I$, one has
    \begin{equation*}
        \begin{aligned}
          \dot{\vartheta}(t)&=K_1(t)+K_2(t)u(t)+\int_{0}^t K_3(t,s)u(s)ds\\
           &\leq K_1(t)+(K_2(t)+\int_{0}^t K_3(t,s)ds)\vartheta(t)\\
           &= K_1(t)+\upsilon(t)\vartheta(t),
        \end{aligned}
    \end{equation*}
    which, by virtue of the classical Grönwall's inequality, implies that  
    $$
    u(t)\leq \vartheta(t)\leq \vartheta(0)\exp\left(\int_0^t \upsilon(s)ds\right)+\int_{0}^t K_1(s)\exp\left(\int_s^t \upsilon(\tau)d\tau\right)ds.
    $$
    The result follows by noting that $\vartheta(0)=u(0)$.
\end{proof}

The following lemma, proved in \cite{JV-regular}, is a  criterion for weak compactness of absolutely continuous functions in $C([0,T];\H)$.
\begin{lemma}\label{compactness} Let $I:=[0,T]$ and let $(x_n)_n$ be a sequence of absolutely continuous functions from $I$ into $\H$.  Assume that for all $n\in \mathbb{N}$
    \begin{equation}
        \label{acotamiento}
        \begin{aligned}
            \Vert \dot{x}_n(t)\Vert &\leq \psi(t) &\ \textrm{ a.e } \, t\in I,
        \end{aligned}
    \end{equation}
    where {$\psi\in L^1(I;\mathbb{R})$} and that $ x_n(0) \to x_0$ as $n\to +\infty$. Then, there exists a subsequence $(x_{n_k})_k$ of $(x_n)_n$ and an absolutely continuous function $x(\cdot)$ such that
    \begin{enumerate}[label=(\roman{*})]
        \item\label{comp-i}  $x_{n_k}(t)\rightharpoonup x(t)$ in $\H$ as $k\to +\infty$ for all $t\in I$;
        \item\label{comp-ii} $x_{n_k}\rightharpoonup x$ in $L^1\left(I;\H\right)$ as $k\to +\infty$;
        \item\label{comp-iii} $\dot{x}_{n_k}\rightharpoonup \dot{x}$ in $L^1\left(I;\H\right)$ as $k\to +\infty$;
        \item\label{comp-iv} $\Vert \dot{x}(t)\Vert \leq \psi(t)$ a.e. $t\in I$.
    \end{enumerate}
\end{lemma}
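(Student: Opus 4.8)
The plan is to treat the sequence of derivatives as the primary object and recover everything else by integration. Since $\|\dot{x}_n(t)\| \le \psi(t)$ for a.e.\ $t$ with $\psi \in L^1([0,T];\mathbb{R})$, the family $\{\dot{x}_n\}_n$ is dominated by a fixed integrable function; hence it is bounded and uniformly integrable in $L^1([0,T];\H)$. Because $\H$ is a separable Hilbert space (in particular reflexive, so that bounded sets are relatively weakly compact and $\H$ enjoys the Radon--Nikodym property), the Dunford--Pettis theorem applies and yields that $\{\dot{x}_n\}_n$ is relatively weakly compact in $L^1([0,T];\H)$. First I would extract a subsequence $(x_{n_k})_k$ with $\dot{x}_{n_k} \rightharpoonup v$ weakly in $L^1([0,T];\H)$ for some $v \in L^1([0,T];\H)$.

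Next I would define the candidate limit by $x(t) := x_0 + \int_0^t v(s)\,ds$, which is absolutely continuous with $\dot{x} = v$, so that \ref{comp-iii} holds by construction. To obtain the pointwise convergence \ref{comp-i}, fix $t \in [0,T]$ and $h \in \H$ and test the weak convergence against $\mathbf{1}_{[0,t]} \otimes h \in L^\infty([0,T];\H)$, which gives $\langle \int_0^t \dot{x}_{n_k}(s)\,ds, h\rangle \to \langle \int_0^t v(s)\,ds, h\rangle$; combining this with $x_{n_k}(0) \to x_0$ and the identity $x_{n_k}(t) = x_{n_k}(0) + \int_0^t \dot{x}_{n_k}(s)\,ds$ yields $x_{n_k}(t) \rightharpoonup x(t)$.

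For \ref{comp-ii}, I would first note the uniform bound $\|x_{n_k}(t)\| \le \sup_k \|x_{n_k}(0)\| + \int_0^T \psi(s)\,ds =: M$, which is finite because $x_{n_k}(0) \to x_0$. Then for any $\phi \in L^\infty([0,T];\H)$, the integrands $\langle x_{n_k}(s), \phi(s)\rangle$ converge pointwise to $\langle x(s), \phi(s)\rangle$ by \ref{comp-i} and are dominated by the integrable function $M\|\phi\|_\infty$; the dominated convergence theorem then gives $\int_0^T \langle x_{n_k}(s), \phi(s)\rangle\,ds \to \int_0^T \langle x(s), \phi(s)\rangle\,ds$, which is precisely weak convergence in $L^1([0,T];\H)$. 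Finally, \ref{comp-iv} follows from Mazur's lemma: the set $\{f \in L^1([0,T];\H) : \|f(t)\| \le \psi(t) \text{ a.e.}\}$ is convex and strongly closed, hence weakly closed, and contains every $\dot{x}_{n_k}$, so it contains the weak limit $v = \dot{x}$.

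The step I expect to require the most care is the application of the Dunford--Pettis theorem in the vector-valued setting: one must justify relative weak \emph{compactness} in $L^1([0,T];\H)$, not merely boundedness, which is exactly where the reflexivity of $\H$ enters (via pointwise relative weak compactness of the values together with uniform integrability). The remaining verifications are routine once the weak limit of the derivatives is in hand.
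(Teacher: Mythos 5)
Your proof is correct. The paper does not prove this lemma itself but defers to \cite{JV-regular}, and your argument — uniform integrability of the derivatives, the vector-valued Dunford--Pettis theorem (valid here precisely because the values $\dot{x}_n(t)$ lie in the weakly compact set $\psi(t)\mathbb{B}$ of the reflexive space $\H$), reconstruction of $x$ by integration, and Mazur's lemma for the pointwise bound — is essentially the standard proof given there, with the one genuinely delicate step correctly identified and justified.
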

\subsection{Elements of nonlinear analysis}
Let $\left(e_n\right)_{n\in \mathbb{N}}$ be an orthonormal basis of $\H$. For every $n\in \mathbb{N}$ we consider the linear operator $P_n$ from $\H$ into $\operatorname{span}\left\{ e_1,\ldots,e_n \right\}$ defined as
\begin{equation}\label{projector}
    P_{n}\left(\sum_{k=1}^{\infty}\left\langle x,e_{k} \right\rangle e_k\right)=\sum_{k=1}^n \left\langle x,e_{k} \right\rangle e_k.
\end{equation}
The following lemma summarizes the main properties of the linear operator $P_n$ (we refer to \cite{JV-Galerkin} for its proof).

\begin{lemma}\label{proyecciones} Let  $\left(e_n\right)_{n\in \mathbb{N}}$ be an orthonormal basis of a separable Hilbert space $\H$. Then, 
    \begin{enumerate}[label=(\roman{*})]
        \item $\Vert P_n(x)\Vert \leq \Vert x\Vert$ for all $x\in \H$;
        \item $\left\langle P_n(x),x-P_n(x)\right\rangle=0$ for all $x\in \H$;
        \item $P_n(x)\to x$ as $n\to +\infty$ for all $x\in \H$;
        \item\label{Pro3}  $P_{\theta(n)}(x_{\theta(n)})\rightharpoonup x$ for any $\theta\colon \mathbb{N}\to \mathbb{N}$ increasing  and  $x_{\theta(n)}\rightharpoonup x$;
        \item if $B\subset \H$ is relatively compact then $\sup_{x\in B}\Vert x-P_n(x)\Vert \to 0$ as $n\to+\infty$.
    \end{enumerate}
\end{lemma}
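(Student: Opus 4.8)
The plan is to prove the five assertions directly from the defining formula \eqref{projector}, since each is a standard property of the orthogonal projection onto $\operatorname{span}\{e_1,\dots,e_n\}$. Throughout I would use that, for $x\in\H$, the expansion $x=\sum_{k=1}^\infty\langle x,e_k\rangle e_k$ holds together with Parseval's identity $\Vert x\Vert^2=\sum_{k=1}^\infty|\langle x,e_k\rangle|^2$, so that $x-P_n(x)=\sum_{k>n}\langle x,e_k\rangle e_k$. The first three items are then immediate consequences of orthonormality.

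For (i), I would compute $\Vert P_n(x)\Vert^2=\sum_{k=1}^n|\langle x,e_k\rangle|^2\le\sum_{k=1}^\infty|\langle x,e_k\rangle|^2=\Vert x\Vert^2$. For (ii), since $P_n(x)$ is supported on the indices $\le n$ while $x-P_n(x)$ is supported on the indices $>n$, orthonormality forces every cross term to vanish, giving $\langle P_n(x),x-P_n(x)\rangle=0$. For (iii), I would observe that $\Vert x-P_n(x)\Vert^2=\sum_{k>n}|\langle x,e_k\rangle|^2$ is the tail of a convergent series and hence tends to $0$ as $n\to+\infty$.

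The two substantive items are (iv) and (v). For (iv), I would set $m=\theta(n)$ (noting $m\to\infty$ as $n\to\infty$ because $\theta$ is increasing) and exploit that $P_m$ is self-adjoint. For any fixed $y\in\H$ I would write
\[
\langle P_m(x_m),y\rangle=\langle x_m,P_m(y)\rangle=\langle x_m,y\rangle+\langle x_m,P_m(y)-y\rangle.
\]
The first term converges to $\langle x,y\rangle$ by the weak convergence of $(x_m)$; the second is bounded by $\Vert x_m\Vert\,\Vert P_m(y)-y\Vert$, which tends to $0$ because weakly convergent sequences are norm-bounded and $\Vert P_m(y)-y\Vert\to0$ by (iii). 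Since $y$ was arbitrary, this yields $P_m(x_m)\rightharpoonup x$.

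For (v), I would invoke total boundedness of a relatively compact set. Given $\varepsilon>0$, I would cover $B$ by finitely many balls of radius $\varepsilon$ centered at $x_1,\dots,x_p\in B$, and choose $N$ so large that $\Vert x_i-P_n(x_i)\Vert<\varepsilon$ for every $i$ and every $n\ge N$, which is possible by (iii) applied to each of the finitely many centers. For an arbitrary $x\in B$, picking $x_i$ with $\Vert x-x_i\Vert<\varepsilon$ and using (i) to bound $\Vert P_n(x-x_i)\Vert\le\Vert x-x_i\Vert$, the triangle inequality gives $\Vert x-P_n(x)\Vert<3\varepsilon$ uniformly in $x\in B$ for all $n\ge N$. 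The main obstacle is the bookkeeping in (iv), namely the simultaneous dependence of the projector index and the sequence index on $n$; this is handled precisely by splitting off the error term $\langle x_m,P_m(y)-y\rangle$ and controlling it through the uniform boundedness of the weakly convergent sequence together with the pointwise convergence from (iii).
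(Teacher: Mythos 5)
Your proof is correct and complete. The paper itself does not prove this lemma---it defers to the cited reference \cite{JV-Galerkin}---but your arguments are the standard ones: Parseval/orthogonality for (i)--(iii), self-adjointness of $P_m$ plus uniform boundedness of weakly convergent sequences for (iv), and total boundedness with a $3\varepsilon$ argument for (v), which is exactly how these facts are established in the literature.
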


Let $A$ be a bounded subset of $\H$. We define the \emph{Kuratowski measure of non-compactness of $A$}, $\alpha(A)$, as
\begin{equation*}
    \alpha(A)=\inf\{d>0\colon A \textrm{ admits a finite cover by sets of diameter }\leq d\},
\end{equation*}
and the \emph{Hausdorff measure of non-compactness of} $A$, $\beta(A)$, as
\begin{equation*}
    \beta(A)=\inf\{r>0\colon A \textrm{ can be covered by finitely many balls of radius } r\}.
\end{equation*}
For convenience we define $\alpha(A) = \beta(A) =+\infty$, whenever $A$ is unbounded. 
In Hilbert spaces, the relation between these two concepts  is given by the inequality: 
\begin{equation}
    \label{equivalentes}
    \sqrt{2}\beta(A)\leq \alpha(A)\leq 2\beta(A) \textrm{ for } A\subset \H \textrm{ bounded.}
\end{equation}

The following proposition summarizes the main properties of Kuratowski and Hausdorff measures of non-compactness (see \cite[Section~9.2]{MR1189795} ).
\begin{proposition}\label{Kura}
    Let $\H$ be an infinite dimensional Hilbert space and $B,B_1$ and $B_2$ be subsets of $\H$. Let $\gamma$ be either the Kuratowski or the Hausdorff measures of non-compactness. Then,
    \begin{enumerate}[label=(\roman{*})]
        \item $\gamma(B)=0$ if and only if $\overline{B}$ is compact;
        \item $\gamma(\lambda B)=|\lambda|\gamma(B)$ for every $\lambda\in \mathbb{R}$;
        \item  $\gamma(B_1+B_2)\leq \gamma(B_1)+\gamma(B_2)$;
        \item $B_1\subset B_2$ implies $\gamma(B_1)\leq \gamma(B_2)$;
        \item $\gamma(\operatorname{conv}B)=\gamma(B)$;
        \item $\gamma(\bar{B})=\gamma(B)$.
    \end{enumerate}
\end{proposition}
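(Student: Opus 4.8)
The plan is to argue directly from the two cover-based definitions of $\alpha$ and $\beta$, treating both in parallel since the same manipulations of finite covers apply to each; throughout I write $\gamma$ for either measure. Properties (ii)--(iv) and (vi) are essentially bookkeeping with covers. For (iv), every finite cover of $B_2$ (by sets of diameter $\le d$, or by balls of radius $r$) is automatically a cover of $B_1\subset B_2$, so the defining infimum for $B_1$ cannot exceed that for $B_2$. For (ii) with $\lambda\neq 0$, dilating a cover of $B$ by the factor $\lambda$ yields a cover of $\lambda B$ whose sets have diameter (resp. radius) multiplied by $|\lambda|$, giving $\gamma(\lambda B)\le|\lambda|\gamma(B)$; applying the same inequality to $\lambda B$ with factor $\lambda^{-1}$ gives the reverse, and the case $\lambda=0$ is immediate since $\{0\}$ is compact. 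For (iii) I would cover $B_1$ and $B_2$ separately and form the pairwise sums of the covering sets: the sum of a set of diameter $\le d_1$ and one of diameter $\le d_2$ has diameter $\le d_1+d_2$ (and the sum of two balls of radii $r_1,r_2$ lies in one ball of radius $r_1+r_2$), so these finitely many sums cover $B_1+B_2$ and yield subadditivity. For (vi), one inequality is (iv) since $B\subset\bar B$; for the other, I would take a cover of $B$ by \emph{closed} sets of diameter $\le d$ (resp.\ closed balls of radius $r$)---legitimate because passing to closures leaves diameters unchanged---and observe that a finite union of closed sets is closed and hence already contains $\bar B$.

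For (i), I would use completeness of $\H$. If $\overline B$ is compact then it is totally bounded, so for every $\varepsilon>0$ it admits a finite cover by balls of radius $\varepsilon$; hence $\gamma(B)\le\varepsilon$ for all $\varepsilon$, i.e.\ $\gamma(B)=0$. Conversely, $\gamma(B)=0$ means $B$ is totally bounded, and a totally bounded subset of a complete space has compact closure; the equivalence \eqref{equivalentes} guarantees that this characterization is insensitive to which of the two measures is used.

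The main obstacle is (v); the inequality $\gamma(B)\le\gamma(\operatorname{conv}B)$ is just (iv), so the content is $\gamma(\operatorname{conv}B)\le\gamma(B)$, which I would obtain by the classical Darbo-type argument. Fix $d>\alpha(B)$ and cover $B$ by sets $S_1,\dots,S_m$ of diameter $\le d$; replacing each $S_i$ by its closed convex hull does not increase its diameter, so I may assume the $S_i$ are convex. Writing $\Delta=\{\lambda\in\mathbb{R}^m:\lambda_i\ge0,\ \sum_i\lambda_i=1\}$ for the standard simplex and $S_\lambda:=\sum_{i=1}^m\lambda_i S_i$, convexity of the $S_i$ gives $\operatorname{conv}B=\operatorname{conv}\big(\bigcup_i S_i\big)=\bigcup_{\lambda\in\Delta}S_\lambda$, while each $S_\lambda$ has diameter $\le\sum_i\lambda_i\operatorname{diam}(S_i)\le d$. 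Since $\Delta$ is compact in $\mathbb{R}^m$ and the boundedness of $B$ bounds $\bigcup_i S_i$, the assignment $\lambda\mapsto S_\lambda$ varies uniformly continuously in the Hausdorff sense; thus choosing a finite $\varepsilon$-net $\lambda^{(1)},\dots,\lambda^{(N)}$ of $\Delta$ lets me cover $\operatorname{conv}B$ by the finitely many sets $S_{\lambda^{(k)}}+\varepsilon\mathbb{B}$, each of diameter $\le d+2\varepsilon$. Letting $\varepsilon\downarrow0$ and then $d\downarrow\alpha(B)$ gives $\alpha(\operatorname{conv}B)\le\alpha(B)$, and the identical scheme with balls in place of diameters handles $\beta$. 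The delicate point is precisely the uniform dependence of $S_\lambda$ on $\lambda$, for which compactness of the simplex together with boundedness of $B$ is essential.
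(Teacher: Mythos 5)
Your proof is correct. Note that the paper does not prove this proposition at all --- it simply cites \cite[Section~9.2]{MR1189795}, where these properties are established --- so there is no in-paper argument to compare against. Your cover-manipulation arguments for (ii)--(iv) and (vi), the total-boundedness/completeness argument for (i), and in particular the Darbo-type argument for (v) (covering $B$ by finitely many convex sets $S_i$, writing $\operatorname{conv}B$ as the union of the sets $S_\lambda=\sum_i\lambda_i S_i$ over the simplex, and exploiting compactness of the simplex together with boundedness of $B$ to extract a finite subcover up to an $\varepsilon$-enlargement) are exactly the classical proofs one finds in that reference; all the steps you flag as delicate (the identity $\operatorname{conv}(\bigcup_i S_i)=\bigcup_{\lambda\in\Delta}S_\lambda$ for convex $S_i$, the bound $\operatorname{diam}(S_\lambda)\le\max_i\operatorname{diam}(S_i)$, and the uniform Hausdorff continuity of $\lambda\mapsto S_\lambda$) check out. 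The only cosmetic remark is that in (i) you do not actually need the comparison \eqref{equivalentes}: both $\alpha(B)=0$ and $\beta(B)=0$ are directly equivalent to total boundedness of $B$ from the respective definitions.
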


The following lemma (see \cite[Proposition~9.3]{MR1189795}) is a useful rule for the interchange of $\beta$ and integration.  
\begin{lemma}\label{2.3}
     Let $(v_n)$ be a sequence of measurable functions $v_n\colon [0,T]\to \H$ such that $\sup_{n}\Vert v_n(t)\Vert\leq \psi(t)$ a.e. $t\in [0,T]$, where  $\psi$ is integrable. Then
    \begin{equation*}
        \beta\left(\left\{\int_{t}^{t+h}v_n(s)ds\colon n\in \mathbb{N} \right\}\right)\leq \int_{t}^{t+h}\beta\left(\{v_n(s)\colon n\in \mathbb{N}\}\right)ds,
    \end{equation*}
    for $0\leq t<t+h\leq T$.
\end{lemma}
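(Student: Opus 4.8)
The plan is to argue for a generic measure $\gamma$ (either the Kuratowski or the Hausdorff measure), since both enjoy the structural properties collected in Proposition \ref{Kura} on which the argument rests. Write $V(s):=\{v_n(s):n\in\mathbb{N}\}$ and $q(s):=\gamma(V(s))$. The first step is to check that $q$ is measurable and integrable. Integrability is immediate once measurability is known, because $V(s)\subseteq \psi(s)\mathbb{B}$ gives, by the monotonicity and positive homogeneity in Proposition \ref{Kura}, the bound $q(s)\le \gamma(\psi(s)\mathbb{B})=\psi(s)\gamma(\mathbb{B})\le 2\psi(s)$. For the measurability, I would first use the Pettis measurability theorem to restrict, up to a null set, all the $v_n$ to a common separable subspace $\H_0\subseteq\H$; fixing a countable dense set $\{y_j\}\subseteq\H_0$, one can compute the measure through $\varepsilon$-nets built from the $y_j$, expressing the covering condition as a countable combination of the measurable maps $s\mapsto \|v_n(s)-y_j\|$. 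This exhibits $q$ as an infimum/supremum of countably many measurable functions, hence measurable.

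The heart of the proof is the inequality, and I would obtain the sharp (constant-free) bound by a sampling argument. Fix $\varepsilon>0$, a partition $t=\sigma_0<\sigma_1<\dots<\sigma_m=t+h$ with cells $J_i:=[\sigma_{i-1},\sigma_i]$ and tags $\tau_i\in J_i$, and write for each $n$
\[
\int_{t}^{t+h}v_n(s)\,ds=\sum_{i=1}^m |J_i|\,v_n(\tau_i)+R_n,\qquad R_n:=\sum_{i=1}^m\int_{J_i}\bigl(v_n(s)-v_n(\tau_i)\bigr)\,ds .
\]
Since $\{\sum_{i}|J_i|v_n(\tau_i):n\}\subseteq \sum_{i}|J_i|\,V(\tau_i)$, the monotonicity, finite subadditivity and positive homogeneity of $\gamma$ (Proposition \ref{Kura}) give $\gamma\bigl(\{\sum_i|J_i|v_n(\tau_i):n\}\bigr)\le \sum_{i=1}^m |J_i|\,q(\tau_i)$, which is a Riemann sum of the integrable function $q$. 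Moreover, from $A\subseteq B+\rho\mathbb{B}\Rightarrow \gamma(A)\le\gamma(B)+2\rho$ (again a consequence of Proposition \ref{Kura} together with $\gamma(\mathbb{B})\le 2$), and observing that $\{\int_t^{t+h}v_n\}$ is contained in $\{\sum_i|J_i|v_n(\tau_i)\}+(\sup_n\|R_n\|)\mathbb{B}$, I obtain
\[
\gamma\Bigl(\Bigl\{\textstyle\int_{t}^{t+h}v_n(s)\,ds:n\in\mathbb{N}\Bigr\}\Bigr)\le \sum_{i=1}^m |J_i|\,q(\tau_i)+2\sup_{n}\|R_n\| .
\]

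It remains to pass to the limit in the last display as the mesh tends to zero and then $\varepsilon\to 0$, and this is exactly the step I expect to be the main obstacle. Two things must be controlled simultaneously: that the Riemann sums of the merely Lebesgue-integrable $q$ converge to $\int_t^{t+h}q$, and that $\sup_n\|R_n\|$ becomes negligible. The latter amounts to a \emph{uniform-in-$n$} control of the oscillations $\sup_{s\in J_i}\|v_n(s)-v_n(\tau_i)\|$, which is genuinely delicate since the family $\{v_n\}$ is in general not equicontinuous; here the domination $\|v_n(s)\|\le\psi(s)$ by a single integrable function is indispensable. I would handle both issues as in \cite[Proposition~9.3]{MR1189795}, combining Lusin's theorem (to realize the $v_n$ as continuous functions off a set of arbitrarily small measure and to make the partition faithful for $q$) with the absolute continuity of $s\mapsto\int\psi$ (so that the discarded set contributes at most $\int\psi$ over a small set, hence less than $\varepsilon$) and the Lipschitz dependence of $\gamma$ on the Hausdorff distance used above. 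Once $\sup_n\|R_n\|<\varepsilon$ and $\bigl|\sum_i|J_i|q(\tau_i)-\int_t^{t+h}q\bigr|<\varepsilon$ for a sufficiently fine adapted partition, letting $\varepsilon\to 0$ yields the asserted inequality.
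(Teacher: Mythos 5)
Your scheme breaks down exactly at the point you flag, and the failure is not a technical delicacy but a structural one: $\sup_n\Vert R_n\Vert$ does \emph{not} become small as the mesh is refined, for any choice of partition and tags. Take $\H=\mathbb{R}e_1$, $[t,t+h]=[0,1]$ and $v_n(s)=\cos(2\pi ns)\,e_1$. These are smooth, dominated by $\psi\equiv 1$, $\int_0^1 v_n\,ds=0$ for all $n\geq 1$, and $q(s)=\gamma(V(s))=0$ since $V(s)$ lies in a one-dimensional subspace. Yet for \emph{any} partition with tags $\tau_1,\dots,\tau_m$, simultaneous Dirichlet approximation produces $n\geq 1$ with every $n\tau_i$ arbitrarily close to an integer, hence $\sum_i|J_i|\cos(2\pi n\tau_i)$ arbitrarily close to $\sum_i|J_i|=1$, and therefore $\sup_n\Vert R_n\Vert\geq 1-\varepsilon$. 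So your intermediate bound degenerates to $\gamma(\{\int v_n\})\leq 0+2$, which never improves. Lusin's theorem and the absolute continuity of $\int\psi$ cannot rescue this: the $v_n$ above are already continuous and uniformly bounded, and the missing ingredient is equicontinuity of the family, which the hypothesis (a countable family dominated by a single integrable $\psi$) simply does not provide. Note also that invoking ``as in \cite[Proposition~9.3]{MR1189795}'' at this point is circular — that \emph{is} the statement being proved, and its proof (due to Heinz) does not proceed via tagged Riemann sums. Your preliminary observations (measurability of $s\mapsto\gamma(V(s))$ as a countable inf/sup, the bound $q\leq 2\psi$, and the perturbation inequality $\gamma(A)\leq\gamma(B)+2\rho$ when $A\subseteq B+\rho\mathbb{B}$) are all correct; only the core estimate fails.

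For the record, the paper does not prove this lemma at all — it is quoted verbatim from \cite[Proposition~9.3]{MR1189795} — so there is no in-paper argument to match. A short correct proof for $\gamma=\beta$ is available from the paper's own Lemma \ref{GGM}: for each $m$,
\begin{equation*}
\beta\Bigl(\Bigl\{\int_{t}^{t+h}v_n(s)\,ds\colon n\Bigr\}\Bigr)\;\leq\;\sup_n\Bigl\Vert (I-P_m)\int_{t}^{t+h}v_n(s)\,ds\Bigr\Vert\;\leq\;\int_{t}^{t+h}\sup_k\Vert (I-P_m)v_k(s)\Vert\,ds,
\end{equation*}
and the integrands decrease in $m$ (each $I-P_m$ is an orthogonal projection) and are dominated by $2\psi$, so letting $m\to\infty$ and applying the lower estimate of Lemma \ref{GGM} pointwise, with $a=\limsup_m\Vert I-P_m\Vert=1$ in a Hilbert space, yields $\beta(\{\int v_n\})\leq\int\beta(V(s))\,ds$. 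This replaces the uniform-in-$n$ oscillation control you needed by a bound that commutes with the integral for free. For the Kuratowski measure the same chain combined with \eqref{equivalentes} only gives the inequality up to a factor $\sqrt{2}$, so the sharp $\alpha$-version genuinely requires Heinz's argument (or one keeps the citation, as the paper does).
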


The following result is due to Gohberg - Goldenstein - Markus (see, e.g., \cite[Theorem~3.9]{MR3587794}).

\begin{lemma}
    \label{GGM} 
    Let $\H$ be a separable Hilbert space and $A$ be a bounded subset of $\H$. Then 
    $$
        \frac{1}{a}\inf_n \sup_{x\in A}\Vert (I-P_n)(x)\Vert \leq \beta(A)\leq \inf_n \sup_{x\in A}\Vert (I-P_n)(x)\Vert,
    $$
    where $P_n$ is the projector defined in \eqref{projector} and $a=\limsup_{n\to +\infty}\Vert I-P_n\Vert$ is a constant.
\end{lemma}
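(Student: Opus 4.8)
The plan is to establish the two inequalities separately, in both cases exploiting the decomposition $x = P_n(x) + (I-P_n)(x)$ together with elementary covering arguments and the finite-dimensionality of the range of $P_n$.

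For the upper bound $\beta(A) \leq \inf_n \sup_{x\in A}\Vert (I-P_n)(x)\Vert$, I would fix $n$ and set $r_n := \sup_{x\in A}\Vert (I-P_n)(x)\Vert$. Since $A$ is bounded and $P_n$ is nonexpansive by Lemma \ref{proyecciones}(i), the set $P_n(A)$ is a bounded subset of the finite-dimensional space $\operatorname{span}\{e_1,\ldots,e_n\}$, hence relatively compact. Thus, for any $\epsilon>0$ I can cover $P_n(A)$ by finitely many balls of radius $\epsilon$ centered at points $y_1,\ldots,y_m$. For each $x\in A$ there is an index $i$ with $\Vert P_n(x)-y_i\Vert\leq \epsilon$, and then $\Vert x-y_i\Vert \leq \Vert (I-P_n)(x)\Vert + \Vert P_n(x)-y_i\Vert \leq r_n+\epsilon$, so $A$ is covered by finitely many balls of radius $r_n+\epsilon$. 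Letting $\epsilon\downarrow 0$ gives $\beta(A)\leq r_n$, and taking the infimum over $n$ yields the claim.

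For the lower bound, I would rewrite the target inequality as $\inf_n \sup_{x\in A}\Vert (I-P_n)(x)\Vert \leq a\,\beta(A)$. Fix $\epsilon>0$ and cover $A$ by finitely many balls of radius $\beta(A)+\epsilon$ centered at $z_1,\ldots,z_p$. For each $x\in A$, choosing $j$ with $\Vert x-z_j\Vert\leq \beta(A)+\epsilon$, the triangle inequality and the definition of the operator norm give
$$
\Vert (I-P_n)(x)\Vert \leq \Vert I-P_n\Vert\,\Vert x-z_j\Vert + \Vert (I-P_n)(z_j)\Vert \leq \Vert I-P_n\Vert(\beta(A)+\epsilon) + \max_{1\leq j\leq p}\Vert (I-P_n)(z_j)\Vert.
$$
Because $\{z_1,\ldots,z_p\}$ is finite and $P_n(z_j)\to z_j$ by Lemma \ref{proyecciones}(iii), the last maximum tends to $0$ as $n\to+\infty$. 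Taking the supremum over $x\in A$, then the $\limsup$ over $n$, and recalling $a=\limsup_n\Vert I-P_n\Vert$, I obtain $\inf_n\sup_{x\in A}\Vert (I-P_n)(x)\Vert \leq a(\beta(A)+\epsilon)$; letting $\epsilon\downarrow 0$ and dividing by $a$ completes the argument.

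The only delicate point is the lower bound, where one must pass to the $\limsup$ in $n$ to introduce the constant $a$, relying crucially on the fact that the finite set of centers is eventually well approximated by its projections so that the additive error term $\max_j\Vert (I-P_n)(z_j)\Vert$ vanishes in the limit (note that here this limit term does not affect the $\limsup$, since it converges to zero). Everything else reduces to the finite-dimensionality of $\operatorname{span}\{e_1,\ldots,e_n\}$ and routine triangle-inequality estimates.
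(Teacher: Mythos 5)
Your argument is correct. Note, however, that the paper does not prove this lemma at all: it is quoted as the Gohberg--Goldenstein--Markus theorem with a citation to \cite[Theorem~3.9]{MR3587794}, so there is no in-paper proof to compare against. Your two covering arguments --- total boundedness of $P_n(A)$ in the finite-dimensional range of $P_n$ for the upper bound, and the splitting $(I-P_n)x=(I-P_n)(x-z_j)+(I-P_n)z_j$ combined with $\limsup_n\Vert I-P_n\Vert=a$ for the lower bound --- constitute the standard proof of that theorem, and every step checks out. The only point worth flagging is the final division by $a$, which requires $a>0$; this holds automatically here because in an infinite-dimensional Hilbert space $I-P_n$ is a nonzero orthogonal projection, so $\Vert I-P_n\Vert=1$ and $a=1$.
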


\begin{lemma}\label{Lemma-Constante}
    There exists $C >0$ such that for any  bounded sequence  $(x_n)$ in a separable Hilbert space $\H$ the following inequality holds:
    \begin{equation}
        \label{Constante}
        \gamma(\{ P_k x_k:  k\in \mathbb{N}\})\leq C\gamma(\{ x_k: k 
        \in \mathbb{N}\}),
    \end{equation}
    where $\gamma$ is the Kuratowski or the Hausdorff measure of non-compactness.
\end{lemma}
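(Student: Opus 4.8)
The plan is to reduce the statement to the Hausdorff measure $\beta$ and then combine Lemma~\ref{GGM} with the elementary algebra of the projectors $P_n$. Since $\sqrt{2}\,\beta(A)\le \alpha(A)\le 2\beta(A)$ by \eqref{equivalentes}, it will be enough to prove the inequality for $\beta$ with some explicit constant; the corresponding bound for $\alpha$ then follows by losing at most a factor $\sqrt{2}$. Writing $A=\{x_n:n\in\mathbb{N}\}$ and $B=\{P_nx_n:n\in\mathbb{N}\}$, I first note that $A$ is bounded by hypothesis and $B$ is bounded because $\Vert P_nx_n\Vert\le\Vert x_n\Vert$ by Lemma~\ref{proyecciones}(i), so $\alpha(A),\beta(A),\alpha(B),\beta(B)$ are all well defined and Lemma~\ref{GGM} applies to both sets.

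Next I would apply the upper bound of Lemma~\ref{GGM} to $B$, giving $\beta(B)\le \inf_m\sup_{y\in B}\Vert (I-P_m)(y)\Vert=\inf_m\sup_{n}\Vert (I-P_m)P_n x_n\Vert$. The heart of the argument is to control the operator $(I-P_m)P_n$. Because $(e_k)_k$ is orthonormal, one has $P_mP_n=P_{\min(m,n)}$, whence $(I-P_m)P_n=P_n-P_{\min(m,n)}$; this vanishes when $n\le m$ and equals $P_n-P_m$ when $n>m$. Consequently, for each fixed $m$, $\sup_n\Vert (I-P_m)P_n x_n\Vert=\sup_{n>m}\Vert (P_n-P_m)x_n\Vert$.

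Expanding in the basis, $\Vert (P_n-P_m)x_n\Vert^2=\sum_{k=m+1}^n|\langle x_n,e_k\rangle|^2\le \sum_{k>m}|\langle x_n,e_k\rangle|^2=\Vert (I-P_m)x_n\Vert^2$, so that $\sup_{n>m}\Vert (P_n-P_m)x_n\Vert\le \sup_n\Vert (I-P_m)x_n\Vert=\sup_{x\in A}\Vert (I-P_m)(x)\Vert$. Taking the infimum over $m$ and then invoking the lower bound of Lemma~\ref{GGM} applied to $A$ yields $\beta(B)\le \inf_m\sup_{x\in A}\Vert (I-P_m)(x)\Vert\le a\,\beta(A)$, where $a=\limsup_{m}\Vert I-P_m\Vert$. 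Finally, \eqref{equivalentes} gives $\alpha(B)\le 2\beta(B)\le 2a\,\beta(A)\le \sqrt{2}\,a\,\alpha(A)$, so the constant $C:=\sqrt{2}\,a$ works for both measures simultaneously. I expect the only genuine subtlety to be the commutation identity $P_mP_n=P_{\min(m,n)}$ and the resulting cancellation for $n\le m$; once that is in place, the rest is routine bookkeeping with the two inequalities of Lemma~\ref{GGM} and the equivalence \eqref{equivalentes}.
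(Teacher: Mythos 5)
Your proposal is correct and follows essentially the same route as the paper's own proof: reduce to the Hausdorff measure via \eqref{equivalentes}, apply the upper bound of Lemma~\ref{GGM} to $\{P_nx_n\}$, commute the projectors (your identity $(I-P_m)P_n=P_n-P_{\min(m,n)}$ is just the explicit form of the paper's step $\Vert(I-P_m)P_nx_n\Vert=\Vert P_n(I-P_m)x_n\Vert\le\Vert(I-P_m)x_n\Vert$), and finish with the lower bound of Lemma~\ref{GGM} applied to $\{x_n\}$. The only cosmetic difference is in the bookkeeping of the constant ($\sqrt{2}\,a$ versus the paper's $a$), which is immaterial for the statement.
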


\begin{proof} 
    According to \eqref{equivalentes}, it is enough to prove the inequality \eqref{Constante} for the Hausdorff measures of non-compactness  $\beta$. Indeed, due to Lemma \ref{GGM}, it follows that
    \begin{equation*}
        \begin{aligned}
            \beta(\{ P_k x_k\colon k\in \mathbb{N}\})
            &\leq \inf_{n}\sup_{k\in \mathbb{N}}\Vert (I-P_n)(P_k x_k)\Vert \\
            &\leq \inf_{n}\sup_{k\in \mathbb{N}}\Vert P_k(I-P_n)(x_k)\Vert \\
            &\leq \inf_{n}\sup_{k\in \mathbb{N}}\Vert (I-P_n)(x_k)\Vert\\
            &\leq a\beta(\{x_k\colon k\in \mathbb{N}\}),
        \end{aligned}
    \end{equation*}
    where $a$ is the constant given by  Lemma \ref{GGM}.
\end{proof}
\begin{lemma}\label{Lemma-measure-Lipschitz}
    Let $g\colon \H \to \H$  be a function and $B\subset \H$ be a bounded set such that there exists $\kappa\geq 0$  such that
    $$
    \Vert g(x)-g(y)\Vert \leq \kappa \Vert x-y\Vert \textrm{ for all } x,y\in B.
    $$
    Then, for all  $A\subset B$ the following inequality holds 
    \begin{equation}
        \label{C-Lipschitz}
        \gamma(g(A))\leq \sqrt{2}\kappa \gamma(A),
    \end{equation}
    where $\gamma$ is the Kuratowski or the Hausdorff measures of non-compactness.
\end{lemma}
\begin{proof}  Let $A\subset B$.
First, let us show that 
\begin{equation}
        \label{C-Lipschitz2}
        \alpha(g(A))\leq \kappa \alpha(A).
    \end{equation}
Indeed, suppose that $\alpha(A)=d$. Then, given $\varepsilon>0$, we can find sets $S_j$ such that $A=\cup_{j=1}^m S_j$, with $\operatorname{diam}(S_j)\leq d+\varepsilon$. Thus, $g(A)=\cup_{j=1}^m g(S_j)$. Moreover, since $S_j\subset B$ and $g$ is $\kappa$-Lipschitz on $B$, we obtain that
    $\operatorname{diam}(g(S_j))\leq \kappa (d+\varepsilon)$. Therefore, $g(A)$  admits a finite cover by sets $g(S_j)$ of diameter less or equal to $\kappa (d + \varepsilon)$, which means that  $\alpha (g(A)) \leq \kappa (d + \varepsilon)$. Since $\varepsilon$ is arbitrary, we conclude that $\alpha(g(A))\leq \kappa d$, which proves \eqref{C-Lipschitz2}. Finally, inequality \eqref{C-Lipschitz} for the Hausdorff measure of non-compactness $\beta$ follows from inequalities  \eqref{equivalentes} and  \eqref{C-Lipschitz2}.
\end{proof}

\subsection{Elements of Measure Theory}
Consider a topological space $Y$ and a set-valued mapping $M \colon [0,T] \to Y$.  We say that $M$ is measurable if the set $$M^{-1}(U):=\{ t \in [0,T] : M(t) \cap U \neq \emptyset\} $$ is a (Lebesgue) measurable set for  every open set $U\subset Y$.

\begin{lemma}\label{beta-measurability}
    Let $M: [0,T] \rightrightarrows \H$ be a measurable set-valued mapping. Then, the function $ t \mapsto \beta(M(t)) $ is measurable. Particularly, for any sequence $(v_n)_n$ of measurable functions from $[0,T]$ into $\H$  the  function $t \mapsto \beta(\{v_n(t)\colon n\in \mathbb{N} \})$ is measurable.  
\end{lemma}
\begin{proof}
Let us notice that by Proposition \ref{Kura},  $\beta(M(t)) = \beta(\overline{M}(t))$ for all $t\in [0,T]$.   Hence, we can assume without loss of generality that $M(t)$ is closed.  Let us consider a dense set $\{ x_k\}_{k=1}^\infty \subset \H$ and $r\in \mathbb{R}$. Let us denote $U_n:= \bigcup_{ k=1}^n \mathbb{B}_{r}(x_k)$. Hence, by \cite[Theorem 8.1.4]{Aubin_Frankowska_2009_book},  we have that the set  $M^{ -1} \left( \H \setminus U_n \right)$ is a (Lebesgue) measurable set. Finally, we have that 
    \begin{align*}
        \{ t\in [0,T]:  \beta(M(t)) < r\} & = \bigcup_{ n=1 }^\infty \left\{ t: M(t) \subset U_n \right\} = \bigcup_{ n=1 }^\infty [0,T]\setminus  M^{ -1} \left( \H \backslash U_n \right),
    \end{align*}
   which shows the measurability of $t \mapsto \beta(M(t))$. The last assertion follows by noticing that   $M(t):= \{v_n(t): n\in \mathbb{N} \}  $ is a measurable set-valued mapping.  
\end{proof}

\subsection{Tools from Variational Analysis}
A vector $h\in \H$ belongs to the \emph{Clarke tangent cone} $T(S;x)$ whenever for every sequence $(x_n)_n$ in $S$ converging to $x$ and every sequence of positive numbers $(t_n)_n$ converging to $0$, there exists some sequence $(h_n)_n$ in $\H$ converging to $h$ such that $x_n+t_nh_n\in S$ for all $n\in \mathbb{N}$. This cone is closed and convex and its negative polar $N(S;x)$ is the \emph{Clarke normal cone} to $S$ at $x\in S$, that is,
\begin{equation*}
N\left(S;x\right):=\left\{v\in \H\colon \left\langle v,h\right\rangle \leq 0 \quad  \forall h\in T(S;x)\right\}.
\end{equation*}
As usual, $N(S;x)=\emptyset$ if $x\notin S$. Through that normal cone, the Clarke subdifferential of a function $f\colon \H\to \mathbb{R}\cup\{+\infty\}$ is defined by
\begin{equation*}
\partial f(x):=\left\{v\in \H\colon (v,-1)\in N\left(\operatorname{epi}f,(x,f(x))\right)\right\},
\end{equation*}
where $\operatorname{epi}f:=\left\{(x,r)\in \H\times \mathbb{R}\colon f(x)\leq r\right\}$ is the epigraph of $f$. When the function $f$ is finite and locally Lipschitzian around $x$, the Clarke subdifferential is characterized (see \cite{Clarke1998}) in the following simple and amenable way
\begin{equation*}
\partial f(x)=\left\{v\in \H\colon \left\langle v,h\right\rangle \leq f^{\circ}(x;h) \textrm{ for all } h\in \H\right\},
\end{equation*}
where
\begin{equation*}
f^{\circ}(x;h):=\limsup_{(t,y)\to (0^+,x)}t^{-1}\left[f(y+th)-f(y)\right],
\end{equation*}
is the \emph{generalized directional derivative} of the locally Lipschitzian function $f$ at $x$ in the direction $h\in \H$.  The function $f^{\circ}(x;\cdot)$ is in fact the support of $\partial f(x)$. That characterization easily yields that the Clarke subdifferential of any locally Lipschitzian function has the important property of upper semicontinuity from $\H$ into $\H_w$.

For $x\in \H$ and $S\subset \H$ the distance function to the set $S$ at $x\in \H$ is defined by $d_{S}(x):=\inf_{y\in S}\Vert x-y\Vert$. We denote $\operatorname{Proj}_{S}(x)$ the set (possibly empty)
\begin{equation*}
\operatorname{Proj}_{S}(x):=\left\{y\in S\colon d_{S}(x)=\Vert x-y\Vert\right\}.
\end{equation*}
The equality (see, e.g.,  \cite[p.~85]{Clarke1998})
\begin{equation}\label{eq.13}
\begin{aligned}
N\left(S;x\right)&=\overline{\mathbb{R}_+\partial d_S(x)}^{\ast} & \textrm{ for } x\in S,
\end{aligned}
\end{equation}
gives an expression of the Clarke normal cone in terms of the distance function. As usual, it will be convenient to write $\partial d(x,S)$ in place of $\partial d\left(\cdot,S\right)(x)$.

The next result provides the measurability of the Clarke subdifferential for measurable and locally Lipschitz continuous integrands, which will be used in Section \ref{Sweeping-sec}. For a function $f\colon  [0,T] \times\H\times \H \to \mathbb{R}$,   we  denote   $f^\circ_x(t, x, y;h) := (f(t, \cdot, y))^\circ (x;h)$ and 
\begin{equation*}
    \partial_x f(t,x,y) := \left\{  v\in \H : \langle v,h \rangle \leq f^\circ_x(t, x, y;h)  \text{ for all }h \in \H  \right\}.
\end{equation*}
\begin{lemma}\label{Clarke-measurable}
    Let $f\colon [0,T] \times\H\times \H \to \mathbb{R}$ be  measurable with respect to $t\in [0,T]$ and locally Lipschitz with respect to $(x,y) \in \H\times \H$.  Then, the map $(t,x,y,h) \to f^\circ_x (t,x,y;h)$ is measurable, usc with respect to $(x,y)$ and continuous with respect to $h$. Consequently,  the mapping
    $t\rightrightarrows \gph \partial_x f(t,\cdot,\cdot) $  is measurable. Moreover,  the set  $\gph \partial_x f(t,\cdot,\cdot)$ is closed on $\H\times \H_w$ for all $t\in [0,T]$.
\end{lemma}
\begin{proof}
First, it is well-known that $ f^\circ_x (t,x,y;h)$ is usc with respect to $(x,y)$ and continuous with respect to $h$.  We claim that  the function 
    $(t,x,y,h) \mapsto f_x^{\circ}(t,x,y;h)  $ is measurable. Indeed,  consider a sequence $\varepsilon_k \to 0^+$, and a countable dense set $B \subset \mathbb{B}$. Hence,  one has
\begin{equation*}
f_x^{\circ}(t,x,y;h) = \inf_{ k \in \mathbb{N}} \sup_{ \substack{ w \in \varepsilon_k  B\\ s \in (0, \varepsilon_k) \cap \mathbb{Q}}} s^{-1}\left[f(t,x + w+sh,y)-f(t,x+w,y)\right], 
\end{equation*}
which shows the measurability of $(t,x,y,h) \mapsto f_x^{\circ}(t,x,y;h)$. Finally, 
    \begin{align*}
    \gph \partial_x f(t,\cdot) & = \bigcap_{ n \in \mathbb{N} } \left\{ (t, x,y,v) :  \left\langle v  ,h_n\right\rangle \leq f_x^{\circ}(t,x,y;h_n)    \right\}, 
    \end{align*}
    which proves the measurability of $t\rightrightarrows \gph \partial f(t,\cdot,\cdot)$ (see, e.g., \cite[Theorem 8.1.4]{Aubin_Frankowska_2009_book}).
\end{proof}

To deal with Volterra sweeping processes, we recall the definition of the class of positively $\alpha$-far sets, introduced in \cite{Haddad2009} and widely studied in \cite{JV-alpha}. 
\begin{definition} Let $\alpha\in ]0,1]$ and $\rho\in ]0,+\infty]$. Let $S$ be a nonempty closed subset of $\H$ with $S\neq \H$. We say that the Clarke subdifferential of the distance function $d(\cdot,S)$ keeps the origin $\alpha$-far-off on the open tube around $S$, $U_{\rho}(S):=\{x\in \H\colon 0<d(x,S)<\rho\}$, provided
\begin{equation}\label{13}
0<\alpha\leq \inf_{x\in U_{\rho}(S)}d(0,\partial d(\cdot,S)(x)).
\end{equation}
Moreover, if $E$ is a given nonempty set, we say that the family $(S(t))_{t\in E}$ is positively $\alpha$-far if every $S(t)$ satisfies \eqref{13} with the same $\alpha\in ]0,1]$ and $\rho>0$.
\end{definition}
This notion strictly includes  the notion of uniformly subsmooth sets and the notion of uniformly prox-regular sets (see \cite{JV-alpha}).

\begin{definition}Let $S$ be a closed subset of $\H$. We say that $S$ is \emph{uniformly subsmooth}, if for every $\varepsilon>0$ there exists $\delta>0$, such that
\begin{equation}\label{14}
  \left\langle x_1^*-x_2^*,x_1-x_2\right\rangle \geq -\varepsilon \Vert x_1-x_2\Vert
\end{equation}
holds for all $x_1,x_2\in S$,  satisfying $\Vert x_1-x_2\Vert <\delta$ and all $x_i^*\in N\left(S;x_i\right)\cap \mathbb{B}$ for $i=1,2$. Also, if $E$ is a given nonempty set, we say that the family $\left(S(t)\right)_{t\in E}$ is \emph{equi-uniformly subsmooth}, if for every $\varepsilon>0$, there exists $\delta>0$ such that \eqref{14} holds for each $t\in E$ and all $x_1,x_2\in S(t)$ satisfying $\Vert x_1-x_2\Vert <\delta$ and all $x_i^*\in N\left(S(t);x_i\right)\cap \mathbb{B}$ for $i=1,2$.
\end{definition}
It is worth emphasizing that the above definitions include the class of convex and uniformly prox-regular sets, which are common in the study of the sweeping process (see, e.g., \cite{JV-alpha,JV-regular}). It is well-known that uniformly subsmooth sets are normally regular (see, e.g.,  \cite{MR2115366}), which implies that for any  uniformly subsmooth set $S$ one has
\begin{equation}\label{normally-dist}
N(S;x)\cap \mathbb{B}=\partial d_S(x) \textrm{ for all } x\in S.
\end{equation}

\section{Technical assumptions and technical lemmas}\label{hipo-sol}
\begin{enumerate}
    \item[\namedlabel{HF}{$(\mathcal{H}^F)$}]   The set-valued map $F\colon [0,T]\times \H\rightrightarrows \H$  has nonempty, closed and convex values.
    \begin{enumerate}
        \item[\namedlabel{H1F}{$(\mathcal{H}_1^F)$}] The map $t\rightrightarrows \operatorname{gph}F(t,\cdot)$ is measurable, that is, for every (norm) open set $U\subset \H\times \H$, the set  $\{t: \operatorname{gph}F(t,\cdot)\cap U \neq \emptyset\}$ is Lebesgue measurable.
        \item[\namedlabel{H2F}{$(\mathcal{H}_2^F)$}] For a.e.  $t\in [0,T]$,  $\operatorname{gph}F(t,\cdot)$ is closed on $\H\times \H_w$.
        \item[\namedlabel{H3F}{$(\mathcal{H}_3^F)$}] There exist two nonnegative integrable functions $c$ and $d$ such that
        \begin{equation*}
            \begin{aligned}
                d\left(0,F(t,v)\right):=\inf\{\Vert w\Vert \colon w\in F(t,v)\}\leq c(t)\Vert v\Vert +d(t),
            \end{aligned}
        \end{equation*}
        for all $v\in \H$ and a.e. $t\in [0,T]$.
    \end{enumerate}
\end{enumerate}

\begin{enumerate}
    \item[\namedlabel{H4F}{$(\mathcal{H}_4^F)$}]   
For all $r>0$, there exists an integrable function $k_r\colon [0,T]\to \mathbb{R}_+$ such that for a.e. $t\in [0,T]$ and $A\subset r\mathbb{B}$, one has 
    $$
    \gamma(F(t,A))\leq k_r(t)\gamma(A),
    $$
 where $\gamma$ is either the Kuratowski or the Hausdorff measure of noncompactness.
\end{enumerate}

\begin{enumerate}
    \item[\namedlabel{H5F}{$(\mathcal{H}_5^F)$}]  For all $r>0$, there exists an integrable function $\tilde{k}_r\colon [0,T]\to \mathbb{R}$ such that for a.e. $t\in [0,T]$ and all $x_1, x_2\in r\mathbb{B}$ 
    $$
    \langle v_1-v_2,x_1-x_2\rangle \leq \tilde{k}_r(t)\Vert x_1-x_2\Vert^2 \textrm{ for all } v_1\in F(t,x_1),  v_2\in F(t,x_2).
    $$
\end{enumerate}

\begin{enumerate}
    \item[\namedlabel{Hg}{$(\mathcal{H}^g)$}]  The function $g\colon [0,T]\times[0,T]\times \H\to \H$ satisfies
    \begin{enumerate}
        \item For each $v\in \H$, the map $(t,s) \mapsto g(t,s,v)$ is measurable.
        \item For all $r>0$, there exists an integrable function $\mu_{r}\colon [0,T]\to \mathbb{R}_+$ such that for all $(t,s)\in D$ 
        $$
        \Vert g(t,s,x)-g(t,s,y)\Vert \leq \mu_{r}(t)\Vert x-y\Vert \textrm{ for all } x,y\in r\mathbb{B}.
        $$
        Here $D:=\{(t,s)\in [0,T]\times [0,T]\colon s\leq t\}$. 
        \item There exists a nonnegative integrable function $\sigma\colon D\to \mathbb{R}$ such that 
        $$
        \Vert g(t,s,x)\Vert \leq \sigma(t,s)(1+\Vert x\Vert) \textrm{ for all } (t,s)\in D \textrm{ and } x\in \H. 
        $$
    \end{enumerate}
\end{enumerate}

\begin{remark} 
   Let us recall that $\H_w$ is a Suslin space, and it is not difficult to see that  $\mathcal{B}(\H) =\mathcal{B}(\H_w)$. Furthermore, according to \cite[Proposition 1.48]{PapaHandbook-1},  $$\mathcal{B}(\H \times \H_w) =\mathcal{B}(\H) \otimes \mathcal{B}(\H_w).$$
\end{remark}
The following result provides a selection result that will be used in the proof of the Approximation Principle (Theorem \ref{existencia-finito}).
\begin{lemma}\label{Lemma_exis}
     Let $F\colon [0,T]\times \H\rightrightarrows \H$  be a set-valued mapping satisfying \ref{H1F} and \ref{H2F}. Then, for any $ u\colon [0,T] \to \H$ measurable, the set-valued mapping $M(t) := F(t,u(t)) $ is measurable.  Moreover, $M$ admits a measurable selection $v\colon [0,T]\to \H$ such that
     $$
     d\left(0,F(t,u(t) )\right) =\| v(t)\| \textrm{ for a.e. } t\in [0,T].
     $$
\end{lemma}
\begin{proof}
    First, by virtue of \cite[Theorem 1.35]{PapaHandbook-1} (or  \cite[Theorem III.30]{Castaing}), we have that $\gph F \in \mathcal{L}([0,T]) \otimes \mathcal{B}(\H \times \H)$. Furthermore, 
    \begin{align*}
    \gph M = \{ (t,x) : (t,u(t), x) \in \gph F\} = \varphi^{-1} (\gph F),
    \end{align*}
    where $\varphi \colon  [0,T] \times \H \to  [0,T]\times \H \times  \H$ is given by $\varphi(t,x):= (t,u(t), x)$, which is a measurable function. Consequently, the measurability of $\gph M$ follows from the measurability of $\varphi$ and $\gph F$. Since $\gph M \in \mathcal{L}([0,T]) \otimes \mathcal{B}$ and $M$ has closed values, we can apply again \cite[Theorem 1.35]{PapaHandbook-1} (or  \cite[Theorem III.30]{Castaing})  to conclude that $M$ is measurable with nonempty closed and convex values. Then, due to \cite[Corollary 8.2.13]{Aubin_Frankowska_2009_book},  $v(t):=\operatorname{proj}_{ M(t)}(0) \in M(t)$ is well-defined and measurable. Hence, it follows  that $ d\left(0,F(t,u(t) )\right) =\| v(t)\|$ for a.e. $t\in [0,T].$
\end{proof}

It is worth emphasizing that we are not assuming the boundedness for the set-valued mapping defined in \ref{HF}. Hence, the following lemma will be used in the proof of Theorem \ref{existencia-finito}.
\begin{lemma}\label{H123} 
    Assume that \ref{H1F}, \ref{H2F} and \ref{H3F} hold and let $r\colon [0,T]\to \mathbb{R}_+$ be a continuous function. Then, the set-valued map $G\colon [0,T]\times \H \rightrightarrows \H$ defined by
    \begin{equation}\label{DefG}
        \begin{aligned}
            G(t,x)&:=F(t,p_{r(t)}(x))\cap \left(c(t)\Vert p_{r(t)}(x)\Vert+d(t)\right)\mathbb{B},
        \end{aligned}
    \end{equation}
    where $p_{r(t)}(x):=\operatorname{proj}_{r(t)\mathbb{B}}(x)$,  satisfies:
    \begin{enumerate}[label=(\roman{*})]
        \item\label{mea0} $G(t,x)$ is nonempty, closed and convex for all $(t,x)\in [0,T]\times \H$;
        \item  the map $t\rightrightarrows  \operatorname{gph} G(t,\cdot)$ is measurable;
        \item\label{mea2} for a.e.  $t\in [0,T]$, $\gph G(t,\cdot)$ is closed on $\H\times \H_w$;
        \item\label{mea3} for all $x\in \H$ and a.e. $t\in [0,T]$
        \begin{equation*}
        \Vert G(t,x)\Vert:=\sup\{\Vert w\Vert \colon w\in G(t,x)\}\leq c(t)r(t)+d(t).
        \end{equation*}
    \end{enumerate}
\end{lemma}

\begin{proof}
    \ref{mea0} is direct. \ref{mea2} follows from \ref{H2F} and \cite[Theorems~17.25 and 17.32]{Aliprantis}. Also, due to \ref{H3F}, we have
    \begin{equation*}
        \begin{aligned}
            \Vert G(t,x)\Vert =\sup_{w\in G(t,x)}\Vert w\Vert \leq c(t)\Vert p_{r(t)}(x)\Vert+d(t)\leq c(t)r(t)+d(t),
            \end{aligned}
    \end{equation*}
    which proves \ref{mea3}. Thus, by virtue of \ref{mea0} and \ref{mea3}, $G$ takes weakly compact and convex values. Now, let us observe that $\gph G(t,\cdot)=\gph F(t,\cdot)\cap \gph H(t,\cdot)$, where $H(t,x):=(c(t)\Vert p_{r(t)}(x)\Vert +d(t))\mathbb{B}$. It is easy to see that $t\mapsto \gph H(t,\cdot)$ is measurable, so by \cite[Theorem 8.2.4]{Aubin_Frankowska_2009_book}, we get the measurability of $t\mapsto \gph G(t,\cdot )$.
\end{proof}

\section{Galerkin-like method for integro-differential inclusions}\label{partialGalerkin}
In this section, we study the existence of solutions to the following integrodifferential inclusion:
\begin{equation}\label{Problema}
    \left\{
    \begin{aligned}
        \dot{x}(t)&\in F(t,x(t))+\int_{0}^t g(t,s,x(s))ds & \textrm{ a.e. } t\in [0,T];\\
        x(0)&=x_0,
    \end{aligned}
    \right.
\end{equation}
where $F\colon [0,T]\times \H \rightrightarrows  \H$ is a set-valued map with nonempty, closed, and convex values and $g\colon [0,T]\times[0,T] \times\H \to \H$ is a given function. For every $n\in \mathbb{N}$ let us consider the following integro-differential inclusion:
\begin{equation}\label{Problema-n}
    \left\{
    \begin{aligned}
        \dot{x}(t)  &   \in F(t,P_n(x(t)))+\int_{0}^t g(t,s,P_n(x(s)))ds 
        & \textrm{ a.e. } t\in [0,T];\\
        x(0)      &   =P_n(x_0),
    \end{aligned}
    \right.
\end{equation}
where $P_n\colon \H \to \operatorname{span}\left\{ e_1,\ldots,e_n\right\}$ is the linear operator defined in  Lemma \ref{proyecciones}.
The next proposition asserts the existence of solutions for the approximate problem \eqref{Problema-n}.  We will call this result the \emph{Approximation Principle}, since it provides a methodology to approximate the solutions of integro-differential inclusions. In Subsections  \ref{subsection-compactness} and \ref{subsection-Lipschitz}, respectively, we will see that the Approximation Principle, together with a condition of compactness or monotonicity, allows us to obtain the existence of solutions for the problem \eqref{Problema}.

\begin{theorem}[Approximation Principle for integro-differential inclusions]
    \label{existencia-finito} 
    Assume that \ref{HF} and \ref{Hg} hold. Then, for each $n\in \mathbb{N}$ the problem \eqref{Problema-n} admits at least one solution $x_n\in \operatorname{AC}\left([0,T];\H\right)$. Moreover,
    \begin{equation}\label{cota1}
        \begin{aligned}
            \Vert x_n(t)\Vert &\leq r(t) & \textrm{ for all } t\in [0,T],
        \end{aligned}
    \end{equation}
    where for all $t\in [0,T]$ 
    $$ 
        r(t):=\Vert x_0\Vert \exp\left(\int_0^t \eta(s)ds\right)+\int_{0}^t\varepsilon(s)\exp\left(\int_s^t \eta(\tau)d\tau\right)ds,
    $$
    where $\eta(t):= c(t)+\int_{0}^t\sigma(t,s)ds$ and $\varepsilon(t):=d(t)+\int_{0}^t\sigma(t,s)ds$ for all $t\in [0,T]$.
    Moreover, one has
    \begin{equation}\label{cota2}
        \begin{aligned}
            \Vert \dot{x}_n(t)\Vert &\leq \psi(t):= \eta(t)r(t)+\varepsilon(t) \textrm{ for a.e. } t\in [0,T].
        \end{aligned}
    \end{equation}  
\end{theorem}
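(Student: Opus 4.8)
The plan is to reduce \eqref{Problema-n} to a \emph{finite-dimensional} functional differential inclusion, to solve the latter by a set-valued fixed point theorem, and then to recover the a priori bounds through the boosted Gronwall inequality of Lemma \ref{Gronwall-2}. Write $E_n:=\operatorname{span}\{e_1,\dots,e_n\}$ and let $r$ be the function in the statement; it is continuous because $e(\cdot,\cdot)$ and the integrands are, so Lemma \ref{H123} applies to it. First I would truncate the data: replace $F$ by the map $G$ of Lemma \ref{H123} associated with this $r$, and replace the argument of $g$ by its radial truncation $p_{r(s)}(\cdot)$. Since $\|P_n x\|\le\|x\|$ by Lemma \ref{proyecciones}(i), once we prove $\|x(t)\|\le r(t)$ both truncations are inactive along the produced solution, so a solution of the truncated problem is a genuine solution of \eqref{Problema-n}. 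The decisive simplification is that if $x$ solves the truncated problem then $z:=P_n(x)$ solves, in $E_n$, the inclusion $\dot z(t)\in P_nG(t,z(t))+P_n\int_0^t g(t,s,p_{r(s)}(z(s)))\,ds$ with $z(0)=P_n(x_0)$; conversely, given such a $z$ and a measurable selection $w(s)\in G(s,z(s))$ realizing it, the function $x(t):=P_n(x_0)+\int_0^t\big(w(s)+\int_0^s g(s,\tau,p_{r(\tau)}(z(\tau)))\,d\tau\big)\,ds$ satisfies $P_n(x)=z$ and solves the truncated problem. Thus everything reduces to solving the finite-dimensional inclusion for $z$.

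For that I would use the Kakutani--Fan--Glicksberg fixed point theorem in $C([0,T];E_n)$. Given $q\in C([0,T];E_n)$, the map $s\mapsto G(s,q(s))$ is measurable by Lemma \ref{H123}(ii)--(iii) with nonempty closed convex values, hence admits measurable selections; define $\mathcal N(q)$ to be the set of all $z(t)=P_n(x_0)+\int_0^t\big(P_nw(s)+P_n\int_0^s g(s,\tau,p_{r(\tau)}(q(\tau)))\,d\tau\big)\,ds$ as $w$ ranges over such selections. The bound $\|w(s)\|\le c(s)r(s)+d(s)$ from Lemma \ref{H123}(iv) together with $\|g(s,\tau,p_{r(\tau)}(\cdot))\|\le\sigma(s,\tau)(1+r(\tau))$ from \ref{Hg}(c) gives $\|\dot z(s)\|\le\psi(s)$ with $\psi\in L^1$, so $\mathcal N$ maps into the convex set $K=\{z\in\operatorname{AC}([0,T];E_n):z(0)=P_n(x_0),\ \|\dot z\|\le\psi\ \text{a.e.}\}$, which is compact in $C([0,T];E_n)$ by Arzel\`a--Ascoli (finite dimension plus equicontinuity from $\psi\in L^1$). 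The values $\mathcal N(q)$ are nonempty by measurable selection and convex since the selections form a convex set and $w\mapsto z$ is affine, using convexity of $G$.

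The main obstacle is verifying that $\mathcal N$ has closed graph. Given $q_k\to q$ in $C$ and $z_k\in\mathcal N(q_k)$ with $z_k\to z$, generated by selections $w_k(s)\in G(s,q_k(s))$, the $w_k$ are dominated in norm by $c(s)r(s)+d(s)$, hence uniformly integrable, so after extraction $w_k\rightharpoonup w$ in $L^1([0,T];\H)$; by Mazur's lemma together with the upper semicontinuity and convexity of $G(s,\cdot)$ from Lemma \ref{H123}(iii),(i) one gets $w(s)\in G(s,q(s))$ a.e., i.e.\ the convergence theorem for differential inclusions applies. The history term passes to the limit because $p_{r(\tau)}(\cdot)$ is nonexpansive, so $p_{r(\tau)}(q_k(\tau))\to p_{r(\tau)}(q(\tau))$ uniformly, while $g(s,\tau,\cdot)$ is Lipschitz on $r(\tau)\mathbb B$ by \ref{Hg}(b) with the domination of \ref{Hg}(c); dominated convergence then identifies its limit. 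Crucially, since $P_n$ is a bounded finite-rank operator, weak convergence is enough to pass to the limit in $\int_0^t P_nw_k$ (weak convergence becomes strong in the finite-dimensional range $E_n$). Hence $z\in\mathcal N(q)$, and Kakutani--Fan--Glicksberg yields a fixed point $z=q^\ast$, that is, a solution of the finite-dimensional inclusion.

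Finally I would close the argument with the estimates. Lifting $q^\ast$ to $x$ as above and using $w(s)\in G(s,P_n(x(s)))$ together with $\|p_{r(s)}(P_n x(s))\|\le\|x(s)\|$ and \ref{Hg}(c) gives $\|\dot x(t)\|\le c(t)\|x(t)\|+d(t)+\int_0^t\sigma(t,s)(1+\|x(s)\|)\,ds$. Writing $u(t)=\|x(t)\|$, this has the form required by Lemma \ref{Gronwall-2} with $u(0)\le\|x_0\|$, $\alpha(s)=d(s)+\int_0^s\sigma(s,\tau)\,d\tau$, $\beta(s)=c(s)$ and $\gamma(s,\tau)=\sigma(s,\tau)$, and that lemma yields exactly $\|x(t)\|\le r(t)$, which is \eqref{cota1} (its kernel $\mathfrak e$ coinciding with $e$). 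This bound forces $p_{r(t)}(P_n x(t))=P_n x(t)$, so the truncations are inactive and $x$ solves \eqref{Problema-n}; re-inserting $\|P_n x\|\le r$ into the velocity estimate gives $\|\dot x(t)\|\le c(t)r(t)+d(t)+\int_0^t\sigma(t,s)\,ds+\int_0^t\sigma(t,s)r(s)\,ds=\psi(t)$, which is \eqref{cota2}.
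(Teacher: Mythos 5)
Your proposal is correct and reaches the result by the same overall strategy as the paper --- truncate via Lemma \ref{H123} and the radial retraction $p_{r(\cdot)}$, solve the truncated problem by the Kakutani--Fan--Glicksberg theorem, verify upper semicontinuity through a closed-graph/convergence-theorem argument, and then invoke Lemma \ref{Gronwall-2} to show the truncations are inactive. The genuine difference is where the fixed point lives. The paper's map $\mathcal{F}_n$ acts on the set $K$ of \emph{derivatives} $f$ with $\Vert f(t)\Vert\leq\psi(t)$, viewed inside $L^1_w([0,T];\H)$; this forces it to justify weak $L^1$ compactness of $K$ (uniform integrability plus Dunford--Pettis) and metrizability of $K_w$ before the fixed point theorem applies. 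You instead push the \emph{state} into $E_n=\operatorname{span}\{e_1,\dots,e_n\}$ via the equivalence between solutions $x$ of the truncated problem and solutions $z=P_n(x)$ of the projected inclusion, and run the fixed point in $C([0,T];E_n)$, where compactness of your $K$ is elementary Arzel\`a--Ascoli. This is the one place your argument actually exploits finite-dimensionality of the range of $P_n$, which the paper's setup never needs. The trade-offs are modest: you must carry the (correct, but extra) reduction lemma relating $x$ and $z$ and the measurable selection realizing the fixed point, and your closed-graph step still resorts to weak $L^1$ compactness of the dominated selections $w_k$ followed by Mazur's lemma, so the weak-$L^1$ machinery is not entirely avoided --- it is merely moved from the domain of the fixed-point map to the interior of the closed-graph verification. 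Both routes terminate in the identical Gronwall computation, with $u(0)=\Vert P_n(x_0)\Vert\leq\Vert x_0\Vert$, $\alpha(s)=d(s)+\int_0^s\sigma(s,\tau)\,d\tau$, $\beta=c$ and $\gamma=\sigma$, yielding \eqref{cota1} and then \eqref{cota2}.
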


\begin{proof} Let us consider $G(t,x)$ as the mapping defined in \eqref{DefG}, where $r(t)$ is defined as in the statement of the theorem.     Then, due to  Lemma \ref{H123}, $G$ satisfies \ref{H1F}, \ref{H2F} and for a.e. $t\in[0,T]$
    \begin{equation*}
        \Vert G(t,x)\Vert:=\sup\{\Vert w\Vert \colon w\in G(t,x)\} \leq c(t)r(t)+d(t) \textrm{ for all } x\in \H.
    \end{equation*}
Consider the following differential inclusion:
    \begin{equation}\label{Problema-simp}
        \left\{
        \begin{aligned}
            \dot{x}(t)&\in G(t,P_n(x(t)))+\int_{0}^t g(t,s,p_{r(s)}(P_n(x(s))))ds &  \textrm{ a.e. } t\in [0,T];\\
            x(0)&=P_n(x_0).
        \end{aligned}
        \right.
    \end{equation}
    Let $K\subset L^1\left([0,T];\H\right)$ be defined by
    \begin{equation*}
        K:=\left\{f\in L^1\left([0,T];\H\right)\colon \Vert f(t)\Vert \leq \psi(t) \textrm{ a.e. } t\in [0,T] \right\},
    \end{equation*}
    where $\psi$ is defined by \eqref{cota2}. This set is nonempty, closed, and convex. In addition, since $\psi$ is integrable, the set $K$ is bounded and uniformly integrable. Hence, it is compact in $L^1_{w}\left([0,T];\H\right)$ (see \cite[Theorem~2.3.24]{Papa2005}). Since  $L^1\left([0,T];\H\right)$ is separable, we also note that $K$, endowed with the relative $L^1_{w}\left([0,T];\H\right)$ topology is a metric space (see \cite[Theorem~V.6.3]{Dunford-Sch}). Consider the map $\mathcal{F}_n\colon K \rightrightarrows L^{1}\left([0,T];\H\right)$ defined for $f\in K$ as those $v\in L^{1}\left([0,T];\H\right)$ such that for a.e. $t\in [0,T]$
    \begin{equation*}
        v(t)\in G(t,P_n(x_0+\int_{0}^t f(s)ds))
        +\int_{0}^t g(t,s,p_{r(s)}(P_n(x_0+\int_{0}^s f(\tau)d\tau)))ds,
    \end{equation*}
 By Lemma \ref{Lemma_exis}, we conclude that $\mathcal{F}_n(f)$ is nonempty. Moreover,  $\mathcal{F}_n(f)$ is  closed and convex due to the convexity and closedness of the values of $G$. Now, let us show that  $\mathcal{F}_n(K)\subset K$.   Indeed, let $f\in K$ and $v\in \mathcal{F}_n(f)$. Then,
    \begin{equation*}
        \begin{aligned}
            \Vert v(t)\Vert 
            &
            \leq \sup\left\{\Vert w\Vert \colon w\in G(t,P_{n}(x_{0}+\int_{0}^{t} f(s)ds)) \right.\\
            &
            \qquad\qquad\qquad\left.+\int_{0}^{t} g(t,s,p_{r(s)}(P_{n}(x_{0}+\int_{0}^{s} f(\tau)d\tau))) ds\right\}\\
            &\leq
            c(t)r(t)+d(t) +\int_{0}^{t} \sigma(t,s)ds +\int_{0}^{t} \sigma(t,s)r(s)ds\\ 
            &\leq  \eta(t)r(t)+\varepsilon(t)=\psi(t),
        \end{aligned}
    \end{equation*}
    where we have used the definition of $G$ and the map $x\mapsto p_{r(t)}(x)$. \newline \noindent 
    We denote $K_w$ the set $K$ seen as a compact convex subset of $L^1_w\left([0,T];\H\right)$.
    \begin{claim}{}{
        $\mathcal{F}_n$ is upper semicontinuous from $K_w$ into $K_w$.
    }\end{claim}

    \begin{claimproof}{\hspace{-1mm}}{
        By virtue of \cite[Proposition~1.2.23]{PapaHandbook-1}, it is sufficient to prove that its $\operatorname{graph}(\mathcal{F}_n)$ is sequentially closed in $K_w\times K_w$. Indeed, let $(f_m,v_m)\in \operatorname{graph}(\mathcal{F}_n)$ with $f_m\to f$ and $v_m\to v$ in $L^1_w\left([0,T];\H\right)$ as $m\to +\infty$. We have to show that $(f,v)\in \operatorname{graph}(\mathcal{F}_n)$. To do that, let us define
        \begin{equation*}
            \begin{aligned}
                u_m(t)&:=P_n(x_0)+\int_{0}^t f_m(s)ds & \textrm{ for every } t\in [0,T].
            \end{aligned}
        \end{equation*}
        Thus, for a.e. $t\in [0,T]$,
        \begin{equation}\label{paso-limite}
            \begin{aligned}
                v_m(t)\in G(t,P_n(u_m(t)))+\int_{0}^t g(t,s,p_{r(s)}(P_n(u_m(s))))ds.
            \end{aligned}
        \end{equation}
         Also, since $f_m\in K$, we have that
        \begin{equation*}
            \begin{aligned}
              \Vert \dot{u}_m(t)\Vert &\leq \psi(t) &  \textrm{ a.e. } t\in [0,T].
          \end{aligned}
        \end{equation*}}
        Hence, due to  Lemma \ref{compactness}, there exists a subsequence of $(u_m)_m$ (without relabeling) and an absolutely continuous function $u\colon [0,T]\to \H$ such that
        \begin{equation*}
                u_m(t) \to u(t)  \textrm{ weakly  for all } t\in [0,T] \textrm{ and 
 } \dot{u}_m\to \dot{u} \textrm{ in } L^1_w\left([0,T];\H\right),
        \end{equation*}
        which implies that $\dot{u}=f$.
        Moreover, since $(u_m(t))_{m}$ is bounded for every $t\in [0,T]$, $P_n(u_m(t))\to P_n(u(t))$ for every $t\in [0,T]$.
        Consequently, by virtue of \cite[Proposition~2.3.1]{Papa2005}, \eqref{paso-limite} and the upper semicontinuity of $G$ from $\H$ into $\H_w$, for a.e. $t\in [0,T]$, one has        
        \begin{equation}
            \label{estimation001}
            \begin{aligned}
                v(t)
                &\in \overline{\operatorname{conv}}\,w\textrm{-}\limsup_{m\to +\infty}\{v_{m}(t)\}\\
                &\subset \overline{\operatorname{conv}}\,\left[G(t,P_n(u(t)))+\int_{0}^t g(t,s,p_{r(s)}(P_n(u(s))))ds\right]\\
                &=G(t,P_n(u(t)))+\int_{0}^t g(t,s,p_{r(s)}(P_n(u(s))))ds,
            \end{aligned}
        \end{equation}
        which shows that $(f,v)\in \operatorname{graph}(\mathcal{F}_n)$,  as claimed.
    \end{claimproof} \newline    
    Now, we can invoke the Kakutani-Fan-Glicksberg fixed point theorem (see \cite[Corollary~17.55]{Aliprantis}) to the set-valued map $\mathcal{F}_n\colon K_w \rightrightarrows K_w$ to deduce the existence of $\widehat{f}_n\in K$ such that $\widehat{f}_n\in \mathcal{F}_n(\widehat{f}_n)$. Then, the function $x_n\in \operatorname{AC}\left([0,T];\H\right)$ defined for every $t\in [0,T]$ as:
    \begin{equation*}
        x_n(t)=P_n(x_0)+\int_{0}^{t} \widehat{f}_{n}(s)ds,
    \end{equation*}
    is a solution for  \eqref{Problema-simp}. Moreover, $x_n\in \operatorname{AC}\left([0,T];\H\right)$ is a solution of  \eqref{Problema-n}. Indeed, for a.e. $t\in [0,T]$,
    \begin{equation*}
        \begin{aligned}
            \Vert \dot{x}_n(t)\Vert 
            &\leq c(t)\Vert p_{r(t)}(P_n(x_n(t)))\Vert +d(t)+\int_{0}^t \sigma(t,s)ds\\
            &\quad +\int_{0}^t  \sigma(t,s)\Vert p_{r(s)}(P_n(x_n(s)))\Vert ds \\
            &\leq c(t)\Vert x_n(t)\Vert +d(t)+\int_{0}^t\sigma(t,s)ds+\int_{0}^t  \sigma(t,s)\Vert x_n(s)\Vert ds,
        \end{aligned}
    \end{equation*}
where we have used that $x\mapsto p_{r(t)}(x)$ is Lipschitz of constant $1$.
Therefore, by using the inequality $\Vert P_n(x_0)\Vert \leq 
 \Vert x_0\Vert$, we get that for all $t\in [0,T]$
\begin{eqnarray*}
        \Vert x_{n}(t)\Vert 
        &\leq&      \Vert x_{0}\Vert+\int_{0}^{t} d(s)ds+\int_{0}^{t} \int_{0}^{s} \sigma(s,\tau)d\tau ds + \int_{0}^{t} c(s)\Vert x_{n}(s)\Vert ds\\
        &    &      +\int_{0}^{t} \int_{0}^{s} \sigma(s,\tau)\Vert x_{n}(\tau)\Vert d\tau ds,
\end{eqnarray*}
which by virtue of Lemma \ref{Gronwall-2}, implies inequality that $x_n(\cdot)$ satisfies \eqref{cota1}. Finally,  
$$\Vert P_n(x_n(t))\Vert \leq r(t) \textrm{ for all } t\in [0,T].$$
Hence, $p_{r(t)}(P_n(x_n(t)))=P_n(x_n(t))$ for all $t\in [0,T]$, which finishes the proof.
\end{proof}
\begin{remark}\label{obs-Multi}
    It is worth to emphasize that the solution $x_n(\cdot)$ obtained in Theorem \ref{existencia-finito} satisfies for a.e. $t\in [0,T]$
    \begin{equation*}
        \dot{x}_n(t)\in G(t,P_n(x_n(t)))+\int_0^t g(t,s,P_n(x_n(s)))ds,
    \end{equation*}
    where $G(t,x):=F(t,x)\cap (c(t)\Vert x\Vert +d(t))\mathbb{B}$ is the truncation of $F$ to the ball centered at $0$ with radius $(c(t)\Vert x\Vert +d(t))$. Here $c(\cdot)$ and $d(\cdot)$ are the functions defined in \ref{H3F}. Hence, we have reduced a differential inclusion with (possibly) unbounded values to a one satisfying usual linear growth conditions. This fact will be key to the analysis carried out in this article.
\end{remark}

\subsection{Integro-differential inclusions under compactness assumptions}\label{subsection-compactness}
\,\newline In this subsection, we prove a Compactness Principle for integro-differential inclusions, that is, whenever the sequence $(P_n(x_n(t)))_n$ obtained in Theorem \ref{existencia-finito} is relatively compact for all $t\in [0,T]$, then the problem \eqref{diff-inc} admits at least one absolutely continuous solution. This principle is used to show the existence of solutions for the problem \eqref{diff-inc} under a compactness assumption (see Theorem \ref{main}). Subsequently, in Section \ref{Sweeping-sec}, the Principle of Compactness is used to prove the existence of Volterra state-dependent sweeping processes.

\begin{theorem}[Compactness Principle for integro-differential inclusions]\label{main-compactness}  Let assumptions  \ref{HF} and \ref{Hg} hold. Assume that the sequence $(P_n(x_n(t)))_n$ obtained in  Theorem \ref{existencia-finito} is relatively compact for all $t\in [0,T]$. Then, there exists a subsequence $(x_{n_k})_k$ of $(x_n)$ converging strongly pointwisely to a solution $x\in AC([0,T];\H)$ of \eqref{Problema}.  Moreover, 
    \begin{eqnarray*}
        \Vert x(t)\Vert \leq r(t) \textrm{ for all } t\in [0,T] \textrm{ and } \Vert \dot{x}(t)\Vert \leq \psi(t)\textrm{ for a.e. } t\in [0,T],
    \end{eqnarray*}
    where $r$ and $\psi$ are the functions defined in Theorem \ref{existencia-finito}.    
\end{theorem}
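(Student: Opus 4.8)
The plan is to pass to the limit in the approximate problems \eqref{Problema-n} solved by the sequence $(x_n)$ produced in Theorem \ref{existencia-finito}, exploiting the uniform bounds $\Vert\dot x_n(t)\Vert\le\psi(t)$ and $x_n(0)=P_n(x_0)\to x_0$ together with the compactness hypothesis. First I would invoke Lemma \ref{compactness}: since the derivatives are dominated by the fixed integrable function $\psi$ and the initial data converge, there is a subsequence (still denoted $x_{n_k}$) and an absolutely continuous $x$ with $x_{n_k}(t)\rightharpoonup x(t)$ for every $t$, with $\dot x_{n_k}\rightharpoonup\dot x$ in $L^1_w([0,T];\H)$, and with $\Vert\dot x(t)\Vert\le\psi(t)$ a.e. Weak lower semicontinuity of the norm and \eqref{cota1} give $\Vert x(t)\Vert\le r(t)$, while the initial condition $x(0)=x_0$ follows from $x_{n_k}(0)=P_{n_k}(x_0)\to x_0$ and uniqueness of weak limits. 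This settles every assertion except that $x$ solves the inclusion and that the convergence is strong.

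The decisive step is to upgrade weak to strong convergence for the arguments that feed $F$ and $g$. By Lemma \ref{proyecciones}\ref{Pro3}, $x_{n_k}(t)\rightharpoonup x(t)$ yields $P_{n_k}(x_{n_k}(t))\rightharpoonup x(t)$; since the hypothesis makes $\{P_n(x_n(t))\}_n$ relatively compact, every subsequence of $(P_{n_k}(x_{n_k}(t)))_k$ has a strongly convergent sub-subsequence whose limit is forced to coincide with the weak limit $x(t)$, so in fact $P_{n_k}(x_{n_k}(t))\to x(t)$ strongly for each $t$. Writing $R:=\max_{[0,T]}r$, all these vectors lie in $R\mathbb B$, so the local Lipschitz bound in \ref{Hg} gives $g(t,s,P_{n_k}(x_{n_k}(s)))\to g(t,s,x(s))$ pointwise on $D$, dominated by $\sigma(t,s)(1+R)$; dominated convergence then yields $\int_0^t g(t,s,P_{n_k}(x_{n_k}(s)))\,ds\to\int_0^t g(t,s,x(s))\,ds=:G(t)$ for each $t$, and, the integrands being bounded by $\psi$, convergence also holds in $L^1([0,T];\H)$.

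It then remains to close the inclusion exactly as in the Claim inside the proof of Theorem \ref{existencia-finito}. Setting $w_{n_k}(t):=\dot x_{n_k}(t)-\int_0^t g(t,s,P_{n_k}(x_{n_k}(s)))\,ds$, the construction of $x_{n_k}$ gives $w_{n_k}(t)\in F(t,P_{n_k}(x_{n_k}(t)))$ with $\Vert w_{n_k}(t)\Vert\le c(t)r(t)+d(t)$, and $w_{n_k}\rightharpoonup\dot x-G$ in $L^1_w$ (weak derivative minus strong $L^1$ limit). By \cite[Proposition~2.3.1]{Papa2005}, for a.e. $t$ one has $\dot x(t)-G(t)\in\overline{\operatorname{conv}}\,w\textrm{-}\limsup_k\{w_{n_k}(t)\}$; the boundedness of $(w_{n_k}(t))_k$, the upper semicontinuity of $F(t,\cdot)$ from $\H$ into $\H_w$ granted by \ref{H2F}, and $P_{n_k}(x_{n_k}(t))\to x(t)$ force every weak cluster point into the closed convex set $F(t,x(t))$. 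Hence $\dot x(t)\in F(t,x(t))+\int_0^t g(t,s,x(s))\,ds$ a.e., i.e. $x$ solves \eqref{Problema}, with the estimates already recorded.

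The main obstacle is precisely this weak-to-strong upgrade: the whole scheme hinges on supplying $F$ and $g$ with strongly convergent arguments, and the relative compactness hypothesis is used at exactly that one point. I would stress that what one genuinely controls is the \emph{projected} sequence $P_{n_k}(x_{n_k}(t))$, which is what actually enters the inclusion; promoting this to strong convergence of $x_{n_k}(t)$ itself amounts to showing the tail term $\Vert(I-P_{n_k})(x_{n_k}(t))\Vert\to0$, and I would expect this estimate — rather than the subsequent measurable-selection and closed-graph bookkeeping, which is routine given Step~2 — to be the delicate part where the compactness must be exploited with care.
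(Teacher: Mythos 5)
Your proposal is correct and follows essentially the same route as the paper: extract the subsequence via Lemma \ref{compactness}, obtain $P_{n_k}(x_{n_k}(t))\rightharpoonup x(t)$ from Lemma \ref{proyecciones}\ref{Pro3} and upgrade to strong convergence using the relative-compactness hypothesis, then pass to the limit in the inclusion (the paper simply cites the Convergence Theorem of \cite[Proposition~5]{Aizicovici} where you spell out the convexified weak-$\limsup$ argument already used in Theorem \ref{existencia-finito}). The tail estimate $\Vert (I-P_{n_k})(x_{n_k}(t))\Vert\to 0$ that you flag at the end is likewise not addressed in the paper's proof, which only establishes strong convergence of the projected sequence --- and that is all that is needed to close the inclusion.
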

\begin{proof}
    We will show the existence of the subsequence via  Lemma \ref{compactness}.
    \begin{claim}{1}{ 
        There exists a subsequence $(x_{n_k})_k$ of $(x_n)_n$ and an absolutely continuous function $x$ such that \ref{comp-i}, \ref{comp-ii}, \ref{comp-iii} and \ref{comp-iv} from Lemma \ref{compactness}  hold with $\psi$ defined as in the statement of the theorem.
    }\end{claim}
    \begin{claimproof}{1}{
        Due to Theorem  \ref{existencia-finito},  $\Vert \dot{x}_n(t)\Vert \leq \psi(t)$ for a.e. $t\in[0,T]$, which shows that \eqref{acotamiento} holds with the function $\psi$ defined as above. Also, $P_n(x_0)\to x_0$ as $n\to +\infty$. Therefore, the claim follows from  Lemma \ref{compactness}.
    }\end{claimproof}\qed\\
    By simplicity we denote $P_k:=P_{n_k}$ and $x_{k}:=x_{n_k}$ for $k\in \mathbb{N}$.
    \begin{claim}{2}{ 
        $P_k(x_k(t)) \rightharpoonup x(t)$ as $k\to +\infty$  for all $t\in [0,T]$.
    }\end{claim}
    \begin{claimproof}{2}{ 
        Since $x_k(t)\rightharpoonup x(t)$ as $k\to+\infty$ for all $t\in [0,T]$, the result follows from assertion \ref{Pro3} of  Lemma  \ref{proyecciones}.
    }\end{claimproof}\qed
    
    \begin{claim}{3}{ 
        $P_{k}(x_{k}(t)) \to x(t)$ as $k\to +\infty$  for all $t\in [0,T]$.
    }\end{claim}    
    \begin{claimproof}{3}{
        The result follows from Claim 2 and the relative compactness of the sequence $(P_{n}(x_{n}(t)))_{n}$ for a.e. $t\in [0,T]$.
    }\end{claimproof} \qed \\
Let us denote 
$$
z_k(t):=\dot{x}_k(t)-\int_0^t g(t,s,P_k(x_k(s)))ds \textrm{ and } z(t):=\dot{x}(t)-\int_0^t g(t,s,x(s))ds.
$$ Then, by virtue of Claim 1 and Claim 3, we have that $z_k \rightharpoonup z$ in $L^1([0,T];\mathcal{H})$. Moreover, if we denote $G(t,x):=F(t,x)\cap (c(t)\Vert x\Vert +d(t))$, the according to Remark \ref{obs-Multi}, we obtain that
$$
z_k(t)\in G(t,P_k(x_k(t))) \quad \textrm{ for a.e. } t\in [0,T].
$$
Hence, summarizing, we have
    \begin{enumerate}[label=(\roman{*})]
        \item For each $x\in \H$, $G(\cdot,x)$ is measurable.
        \item For a.e. $t\in [0,T]$, $G(t,\cdot)$ is upper semicontinuous from $\H$ into $\H_w$.
        \item $z_k\rightharpoonup z$ in $L^1\left([0,T];\H\right)$;
        \item $P_k(x_k(t))\to x(t)$ as $k\to +\infty$ for a.e. $t\in [0,T]$;
        \item For all $k\in \mathbb{N}$, $z_k(t)\in {G}(t,P_k(x_k(t)))$ for a.e. $\in [0,T]$.
    \end{enumerate}
    These conditions and the Convergence Theorem (see \cite[Theorem~3.1.2]{zbMATH00785401}) imply that 
    $$
    z(t)\in G(t,x(t)) \quad \textrm{ for  a.e. } t\in [0,T],
    $$
    that is,  $x\in \operatorname{AC}\left([0,T];\H\right)$ is a solution of \eqref{Problema}, which finishes the proof.
\end{proof}

The following result establishes the existence of solutions for  \eqref{Problema} under the compactness assumption \ref{H4F}. Hence, whenever $g\equiv 0$, we recover classical results of differential inclusions under compactness assumptions (see, e.g., \cite[Chapter~9]{MR1189795}). Furthermore, when the dimension of space $\H$ is finite, this condition is trivially satisfied.
\begin{theorem}\label{main}
    Assume, in addition to the hypotheses of Theorem \ref{existencia-finito}, that  \ref{H4F} holds.  Then, the problem \eqref{Problema} admits at least one absolutely continuous solution $x(\cdot)$. Moreover, 
    \begin{eqnarray*}
        \Vert x(t)\Vert \leq r(t) \textrm{ for all } t\in [0,T] \textrm{ and } \Vert \dot{x}(t)\Vert \leq \psi(t)\textrm{ for a.e. } t\in [0,T],
    \end{eqnarray*}
    where $r$ and $\psi$ are the functions defined in Theorem \ref{existencia-finito}.
\end{theorem}
\begin{proof} 
    We show that the  the sequence  $(P_{n}(x_{n}(t)))_{n}$ is relatively compact for all $t\in [0,T]$. Hence, the result will be a consequence of  Theorem \ref{main-compactness}.  Indeed, let us consider the measurable set-valued mapping
    $$t\mapsto A(t)= \{P_n(x_n(t)) \colon n\in \mathbb{N}\}.$$ 

  \noindent  We proceed to prove that $\beta(A(t))=0$ for all $t\in [0,T]$, where  $\beta$ is the Hausdorff measure of non-compactness. We first observe that, according to Lemma \ref{beta-measurability} and  Theorem \ref{existencia-finito}, the maps 
    $$t\mapsto \beta(\{\dot{x}(t)\colon n\in \mathbb{N}\}) \textrm{ and } 
    t\mapsto \beta\left(\left\{\int_0^t g(t,s,P_n(x_n(s)))ds\colon n\in \mathbb{N}\right\}\right)
    $$
    are integrable. By virtue of  Lemma \ref{Lemma-Constante}, there exists $C>0$ such that    $$\beta(A(t)) \leq C   \beta(\{x_n(t)\colon n\in \mathbb{N}\}) \textrm{ for all } t\in [0,T].$$ 
 Moreover,  by using Proposition \ref{Kura}, the fact that $\{P_{n}(x_{0})\colon n\in \mathbb{N}\}$ is relatively compact and Lemma \ref{2.3}, we obtain that 
    \begin{equation*}
            \begin{aligned}
                \beta(A(t)) 
                &\leq C   \beta(\{x_{n}(t)\colon n\in \mathbb{N} \}) \\
                &   \leq C\beta (\{P_{n}(x_{0})\colon n\in \mathbb{N}\}) + C\beta(\{\int_{0}^{t} \dot{x}_{n}(s)ds\colon n\in \mathbb{N}\})\\
                &   =C\beta(\{\int_{0}^{t} \dot{x}_{n}(s)ds\colon n\in \mathbb{N}\})  \leq C\int_{0}^{t} \beta(\{\dot{x}_{n}(s)\colon n\in \mathbb{N}\})ds.
            \end{aligned}
        \end{equation*} 
    Now, again by Proposition \ref{Kura} and Lemma \ref{2.3},   we get that for all $s \in [0, T]$ {\small 
    \begin{align*}
    \beta(\{\dot{x}_{n}(s)\colon n\in \mathbb{N}\} &\leq \beta(F(s,A(s)))  + \beta \left(\left\{\int_{0}^{s} g(s,\tau,P_n(x_n(\tau)))d\tau \colon n\in \mathbb{N}\right\}\right) \\
    & \leq \beta(F(s,A(s)))  + \int_{0}^{s}  \beta\left(\left\{     g(s,\tau,P_n(x_n(\tau)))\colon n\in \mathbb{N}\right\}\right) d\tau.
    \end{align*}}
    Furthermore, due to  \eqref{equivalentes} and \ref{H4F}, we obtain that
    \begin{align*}
        \beta(F(s,A(s))) \leq\sqrt{2}k_{r(T)}(s)\beta(A(s)) \text{ for all } s\in [0,T].
    \end{align*}
   On the other hand, using  Lemma \ref{Lemma-measure-Lipschitz} and  \ref{Hg}, we yield   
\begin{align*}
\beta\left(\left\{     g(s,\tau,P_n(x_n(\tau)))\colon n\in \mathbb{N}\right\}\right) \leq  \sqrt{2}\mu_{r(T)}(\tau) \beta(A(\tau)) \textrm{ for all } s\in [0,T].
\end{align*}
     
\noindent Therefore, for all $t\in [0,T]$
        $$
         \beta(A(t))\leq \sqrt{2}C\int_{0}^{t} k_{r(T)}(s)\beta(A(s))ds+\sqrt{2}C \int_{0}^{t} \int_{0}^{s}  \mu_{r(T)}(\tau) \beta(A(\tau))d\tau ds,
        $$
        which, due to Lemma \ref{Gronwall-2},  implies that $\beta(A(t))=0$ for all $t\in [0,T]$. \\
        Therefore, the sequence $(P_{n}(x_{n}(t)))_{n}$ is relatively compact for all $t\in [0,T]$. The proof is finished.
\end{proof}

\subsection{Integro-differential inclusions under monotonicity assumptions}\label{subsection-Lipschitz}
The following result establishes the existence of solutions for   \eqref{Problema} under a monotonicity condition on the set-valued map $F$. Hence, whenever $g\equiv 0$, we recover classical results of differential inclusions under monotonicity conditions (see, e.g., \cite[Chapter~9]{MR1189795}). Furthermore, this result does not require any compactness assumption on the problem data.
\begin{theorem}\label{main-Lipschitz}
    Assume, in addition to the hypotheses of Theorem \ref{existencia-finito}, that  \ref{H5F} holds.  Then, the problem \eqref{Problema} has a unique absolutely continuous solution $x$. Moreover, 
    \begin{eqnarray*}
        \Vert x(t)\Vert \leq r(t) \textrm{ for all } t\in [0,T] \textrm{ and } \Vert \dot{x}(t)\Vert \leq \psi(t)\textrm{ for a.e. } t\in [0,T],
    \end{eqnarray*}
    where $r$ and $\psi$ are the functions defined in Theorem \ref{existencia-finito}.
\end{theorem}

\begin{proof} We will prove that  $(P_n x_n(t))_n$ is a Cauchy sequence for all $t\in [0,T]$. Indeed, fix $n, m\in \mathbb{N}$ with $n\geq m+1$. Let us consider the absolutely continuous function:
    \begin{eqnarray*}
        \Theta(t):=\frac{1}{2}\Vert P_n x_n(t)-P_m x_m(t)\Vert^2.
    \end{eqnarray*}       
    Then, for a.e. $t\in [0,T]$, 
    \begin{equation*}
        \begin{aligned}
            \dot{\Theta}(t)&=\langle P_n x_n(t)-P_m x_m(t), \dot{x}_n(t)-\dot{x}_m(t)\rangle + \sum_{k=m+1}^n \langle x_n(t),e_k\rangle \langle \dot{x}_m(t),e_k\rangle.
        \end{aligned}
    \end{equation*}

    Set $\delta_{n,m}(t):=\sum_{k=m+1}^n \langle x_n(t),e_k\rangle \langle \dot{x}_m(t),e_k\rangle$. Then,  for a.e. $t\in [0,T]$
    \begin{equation*}
        \begin{aligned}
            \vert\delta_{n,m}(t)\vert
            &\leq \left(\sum_{k=m+1}^n \langle x_n(t),e_k\rangle^ 2\right)^{1/2} \cdot \left(\sum_{k=m+1}^n \langle \dot{x}_m(t),e_k\rangle^2\right)^{1/2}\\
            &\leq \Vert x_n(t)\Vert \cdot \Vert \dot{x}_m(t)\Vert\leq r(t)\cdot \psi(t),
        \end{aligned}
    \end{equation*}    
    where $r$ and $\psi$ are the functions defined in Theorem \ref{existencia-finito}.     Therefore, $\delta_{n,m}$ is uniformly bounded in $L^1([0,T];\mathbb{R})$ and $\delta_{n,m}(t)$ converges to $0$, as $n, m \to +\infty$,  for all $t\in [0,T]$.

    Besides, by using \ref{H5F}, we obtain that for a.e. $t\in [0,T]$, one has
    \begin{equation*}
        \begin{aligned}
            &\dot{\Theta}(t)
               \leq \tilde{k}_{r(T)}(t)\Vert P_{n} x_{n}(t)-P_{m} x_{m}(t)\Vert^{2}\\
            &+\langle P_n x_n(t)-P_m x_m(t), \int_{0}^t  [g(t,s,P_{n}x_{n}(s)) - g(t,s,P_{m}x_{m}(s))] ds \rangle + \vert \delta_{n,m}(t) \vert \\
            &   \leq \tilde{k}_{r(T)}(t)\Vert P_n x_n(t)-P_m x_m(t)\Vert^2\\
            &+\Vert P_{n} x_{n}(t)-P_{m} x_{m}(t)\Vert  \int_{0}^{t} \mu_{r(T)}(s) \Vert P_{n} x_{n}(s)-P_{m} x_{m}(s)\Vert ds + \vert \delta_{n,m}(t)\vert. 
        \end{aligned}
    \end{equation*}
Hence, for all $t\in [0,T]$ 
$$
\dot{\Theta}(t)\leq 2\tilde{k}_{r(T)}(t)\Theta(t)+\sqrt{2\Theta(t)}\int_{0}^t \mu_{r(T)}(s)\sqrt{2\Theta(s)}ds+\vert \delta_{n,m}(t)\vert. 
$$
Therefore, by arguments similar to those given in the proof of Lemma \ref{Gronwall-2}, we get that, for all $t\in [0,T]$, one has 
{\small
    \begin{equation*}
        \begin{aligned}
            \Theta(t)\leq \frac{1}{2}\Vert P_n x_0-P_m x_0\Vert^2 \exp\left(\int_0^t \upsilon(s)ds\right)+\int_{0}^t \vert \delta_{n,m}(s)\vert \exp\left(\int_s^t \upsilon(\tau)d\tau\right)ds,
        \end{aligned}
    \end{equation*}}
    where $\upsilon(t):= 2\tilde{k}_{r(T)}(t)+2\int_{0}^{t}\mu_{r(T)}(s)ds$ for $t\in [0,T]$.    Then, by taking the limit in the above inequality, we obtain that $(P_nx_n(t))_n$ is a Cauchy sequence for all $t\in [0,T]$. Hence,   for some $x\colon [0,T]\to \H$, the following convergence holds
    \begin{equation*}
        P_n x_n(t)\to x(t) \textrm{ for all } t\in [0,T].
    \end{equation*}
     To prove that $x(\cdot)$ is indeed a solution of \eqref{Problema}, we can proceed similarly to the proof of  Theorem \ref{main}.  Finally, to prove the uniqueness, let $x_1$ and $x_2$ be two solutions of \eqref{Problema} and define $\vartheta(t):=\frac{1}{2}\Vert x_1(t)-x_2(t)\Vert^2$. Hence, for a.e. $t\in [0,T]$,   {\small
     \begin{equation*}
        \begin{aligned}
         \hspace{-0.8mm}   \dot{\vartheta}(t)
               &\leq \tilde{k}_{r(T)}(t)\Vert x_1(t)-x_2(t)\Vert^{2}
            +\langle x_1(t)-x_2(t), \int_{0}^t [g(t,s,x_1(s)) - g(t,s,x_2(s))] ds \rangle \\
            & \leq \tilde{k}_{r(T)}(t)\Vert x_1(t)-x_2(t)\Vert^2
              +\Vert x_1(t)-x_2(t)\Vert  \int_{0}^{t} \mu_{r(T)}(s) \Vert x_1(s)-x_2(s)\Vert ds,
        \end{aligned}
    \end{equation*}}
 which, by arguments similar to those given in the proof of Lemma \ref{Gronwall-2}, implies that $\vartheta(t)=0$ for all $t\in [0,T]$.  
\end{proof}

\section{Volterra State-Dependent Sweeping Processes}\label{Sweeping-sec}

In this section, by using the Approximation Principle (Theorem \ref{existencia-finito}) and the  Compactness Principle (Theorem \ref{main-compactness}), we obtain the existence of solutions for the state-dependent sweeping process with integral perturbation:
\begin{equation}\label{Sweeping-Dif}
\left\{
\begin{aligned}
\dot{x}(t)&\in -N(C(t,x(t));x(t))+F(t,x(t))+\int_{0}^t g(t,s,x(s))ds  \textrm{ a.e }  t\in [0,T],\\
x(0)&=x_0,
\end{aligned}
\right.
\end{equation}
where $x_0\in C(0,x_0)$ and $C\colon [0,T]\times \H \rightrightarrows \H$ is set-valued with nonempty and closed values satisfying the following assumptions:
\begin{enumerate}
\item[\namedlabel{Hx1}{$(\mathcal{H}_1)$}] There exist $\zeta\in \operatorname{AC}\left([0,T];\mathbb{R}_+\right)$ and $L\in[0,1[$ such that for all $s,t\in [0,T]$ and all $x,y,z\in \H$
\begin{equation*}
|d(z,C(t,x))-d(z,C(s,y))|\leq |\zeta(t)-\zeta(s)|+L\Vert x-y\Vert.
\end{equation*}

\item[\namedlabel{Hx2}{$(\mathcal{H}_2)$}] The family $\{C(t,x)\colon (t,x)\in [0,T]\times \H\}$ is equi-uniformly subsmooth.

\item[\namedlabel{Hx3}{$(\mathcal{H}_3)$}] For every $t\in [0,T]$, every $r>0$ and every bounded set $A\subset \H$ the set $C(t,A)\cap r\mathbb{B}$ is relatively compact.

\item[\namedlabel{Hx4}{$(\mathcal{H}_4)$}] There exist $\alpha_0\in ]0,1]$ and $\rho\in ]0,+\infty]$ such that for every $x\in \H$
\begin{equation*}
\begin{aligned}
  0<\alpha_0&\leq \inf_{z\in U_{\rho}\left(C(t,x)\right)}d\left(0,\partial d(\cdot,C(t,x))(z)\right) & \textrm{ a.e. } t\in [0,T],
  \end{aligned}
\end{equation*}
where $U_{\rho}\left(C(t,x)\right)=\left\{ z\in \H \colon 0<d(z,C(t,x))<\rho \right\}$.
\end{enumerate}

\begin{remark}\label{remarque}
According to  \cite[Proposition 3.9]{JV-alpha}, hypothesis \ref{Hx2} implies that for every $\alpha_0\in ]\sqrt{L},1]$ there exists $\rho>0$ such that \ref{Hx4} holds. 
\end{remark}

The next theorem is the main result of this section. Whenever $g\equiv 0$, it extends previous results on state-dependent sweeping processes (see, e.g., \cite{JV-alpha,JV-regular,JV-Galerkin}). Moreover, it complements the results from \cite{bouach2021nonconvex}, where the authors prove the existence of solutions for integrally perturbed sweeping processes with uniformly prox-regular sets. The next result is the first existence result for integrally perturbed state-dependent sweeping processes. We emphasize here that no compactness assumptions on the set-valued map $F$ are assumed. Moreover, the existence is obtained within the framework of equi-uniformly subsmooth sets, which strictly contain the class of prox-regular sets.

\begin{theorem}\label{main-sweeping}
Assume, in addition to  \ref{HF} and \ref{Hg},   that  \ref{Hx1}, \ref{Hx2} and \ref{Hx3} hold.  Fix $\alpha_0\in ]\sqrt{L},1]$ and  $\rho>0$ such that \ref{Hx4} holds (see Remark \ref{remarque}). Let us consider $m\in L^1([0,T];\mathbb{R}_+)$ be  the unique solution for the following first-order integral equation: {\small 
\begin{equation}\label{Def-m}
m(t):=\frac{\vert \dot{\zeta}(t)\vert}{\alpha_0^2 -L} +\frac{(1+L)}{\alpha_0^2-L}\psi(t)+\frac{L}{\alpha_0^2-L}\eta(t)\omega(t),
\end{equation}}
where $r$, $\eta$ and $\varepsilon$ are defined in Theorem \ref{existencia-finito} and set  
\begin{equation}\label{def_r}
\omega(t):=\int_0^t m(s)\exp\left(\int_s^t \eta(\tau)d\tau\right)ds.
\end{equation}
Then, the problem \eqref{Sweeping-Dif} admits at least one solution $x\in \operatorname{AC}([0,T];\H)$ such that $\Vert x(t)\Vert \leq r(t)+\omega(t)$ for all $t\in [0,T]$ and 
\begin{equation}\label{der-5.4}
\Vert \dot{x}(t)\Vert \leq \psi(t)+\eta(t)\omega(t)+m(t) \textrm{ for a.e. } t\in [0,T].
\end{equation}
\end{theorem}
\begin{proof} 
We will prove the theorem under the additional assumption:
\begin{equation}\label{condition-rho}
\int_{0}^T \left(\vert \dot{\zeta}(s)\vert +(1+L)(\psi(s)+\eta(s)\omega(s)+m(s)) \right)ds <\rho,
\end{equation}
where $\rho>0$ is defined as in the statement of the theorem and $\psi$ is defined in \eqref{der-5.4}. The general case, without the above assumption on the length of $T$, can be obtained in a similar way as \cite[Step~2]{JV-Galerkin}. \newline
\noindent  Let $H\colon [0,T]\times \H \to \H$ be the set-valued map defined as
$$
H(t,x):=-m(t)\partial d_{C(t,x)}(x)+F(t,x)
$$
where $m$ is defined in \eqref{Def-m}. 
We will show, by using the results from Section \ref{partialGalerkin}, that the following differential inclusion has at least one solution:
\begin{equation*}
\left\{
\begin{aligned}
\dot{x}(t)&\in H(t,x(t))+\int_0^t g(t,s,x(s))ds  \textrm{ a.e. } t\in [0,T],\\
x(0)&=x_0.
\end{aligned}
\right.
\end{equation*} 
Moreover, any solution to the above problem solves \eqref{Sweeping-Dif}.

The proof is divided into several claims. \newline \noindent
\emph{Claim 1}: $H$ satisfies the assumptions of Theorem \ref{existencia-finito}, that is, 
\begin{itemize}
\item[(i)] The map $t\rightrightarrows \gph H(t,\cdot)$ is measurable.
\item[(ii)] For a.e. $t\in [0,T]$, $\gph H(t,\cdot)$ is closed on $\H \times \H_w$.
\item[(iii)] For all $x\in \H$ and a.e. $t\in [0,T]$
$$
 d\left(0,H(t,x)\right)  \leq  m(t)+c(t)\Vert x\Vert +d(t),
$$
\end{itemize}
where $c, d$ and $m$ are defined by \ref{H3F} and \eqref{Def-m}, respectively.

\emph{Proof of Claim 1:}
 First, let us check (ii). Indeed, consider $t\in [0,T]$, on the one hand, the mapping $x \mapsto \partial d_{C(t,x)}(x) $ has a closed graph in $\H \times \H_w$ and compact values due to Lemma \ref{Clarke-measurable}; on the other hand, the set-valued mapping $F(t, \cdot)$ has a closed graph in $\H \times \H_w$ due to our assumption \ref{H2F}. Then, $\gph H(t,\cdot)$ is closed on $\H \times \H_w$.

Now, let us check (i). Let us define the auxiliary set-valued mappings  
$$J^1(t, x) = -m(t)\partial d_{C(t,x)}(x)\times \H  \textrm{ and } J^2(t,x) = \H\times F(t,x).$$
It is easy to see that $t \rightrightarrows \gph J^2(t, \cdot) $ is measurable. Furthermore, by  Lemma  \ref{Clarke-measurable},  the set-valued mapping $t \rightrightarrows \gph J^2(t, \cdot) $  is measurable. Now, we notice that 
\begin{align*}
\gph H(t,\cdot) = \varphi \left (  \gph J^1(t,\cdot) \cap \gph J^2(t,\cdot)  \right), 
\end{align*}
where $\varphi$ is defined as $\varphi(x,u,v):= (x,u+v)$. Then, by virtue of \cite[Theorems 8.2.4~and~8.2.8]{Aubin_Frankowska_2009_book}, we get the measurability of $t\rightrightarrows \gph H(t,\cdot)$.  Finally, we observe that
\begin{equation*}
\begin{aligned}
 d\left(0,H(t,x)\right) & \leq m(t)+ d\left(0,F(t,x)\right)\\
& \leq  m(t) + c(t)\Vert x\Vert+d(t),
\end{aligned}
\end{equation*}
which proves (iii).
  \hfill $\square$

 \noindent For each $n\in \mathbb{N}$, let us consider the following integro-differential inclusion:
\begin{equation}\label{eq.22}\left\{
\begin{aligned}
\dot{x}(t)&\in H(t,P_n(x(t)))+\int_0^t g(t,s,P_n(x(s)))ds  \textrm{ a.e. } t\in [0,T],\\
x(0)&=P_n(x_0),
\end{aligned}
\right.
\end{equation} 
where $(P_n)_n$ is an orthonormal basis of $\H$. By virtue of Theorem \ref{existencia-finito}, the above differential inclusion has at least one solution $x_n\in \operatorname{AC}([0,T];\H)$. Moreover, 
\begin{equation*}
\Vert x_n(t)\Vert \leq {r}(t)+\omega(t) \textrm{ for all } t\in [0,T],
\end{equation*}
and for a.e. $t\in [0,T]$, one has
\begin{equation*}
\Vert \dot{x}_n(t)\Vert \leq \psi(t)+\eta(t)\omega(t)+m(t),
\end{equation*}
where  $\omega$ is defined in \eqref{def_r}.
To simplify the notation, we write $$\Gamma_n(t):=\partial d_{C(t,P_n(x_n(t)))}(P_n(x_n(t))).$$
Without loss of generality (see 
  inequality \eqref{condition-rho}), we assume that that $n\in \mathbb{N}$ is large enough so that
\begin{equation}\label{condition-rho2}
\int_{0}^T \left(\vert \dot{\zeta}(s)\vert +(1+L)(\psi(s)+\eta(s)\omega(s)+m(s)) \right)ds+(1+L)\Vert x_0-P_n(x_0)\Vert <\rho.
\end{equation}
\noindent We observe that there exist $f_n(t)\in F(t,P_n(x_n(t)))$ and $d_n(t)\in \Gamma_n(t)$ such that
$$
\dot{x}_n(t)=-m(t)d_n(t)+f_n(t)+\int_{0}^t g(t,s,P_n(x_n(s)))ds \quad \textrm{ a.e. } t\in [0,T].
$$
\noindent Define $\varphi_n(t):=d_{C(t,P_n(x_n(t)))}(P_n(x_n(t)))$ for $t\in [0,T]$. \\ 
\noindent \emph{Claim 2:} The distance function satisfies the following inequality: $$\varphi_n(t)\leq (1+L)\Vert x_0-P_n(x_0)\Vert \textrm{ for all } t\in [0,T].$$
 \begin{claimproof}{2}{ The idea is to estimate the derivative of the distance function $\varphi_n(t)$. To do that, we proceed to show first that $\varphi_n(t)<\rho$ for all $t\in [0,T]$. Indeed, let $t\in [0,T]$ where $\dot{x}_n(t)$ exists. Then, due to \cite[Lemma~4.4]{JV-Galerkin} and \eqref{Def-m}, 
\begin{equation*}
\begin{aligned}
\dot{\varphi}_n(t)&\leq \vert \dot{\zeta}(t)\vert +L\Vert P_n(\dot{x}_n(t))\Vert +\max_{y^{\ast}\in \Gamma_n(t)}\langle y^{\ast},P_n(\dot{x}_n(t))\rangle\\
&\leq \vert \dot{\zeta}(t)\vert +(1+L)\Vert \dot{x}_n(t)\Vert \\
&\leq  \vert \dot{\zeta}(t)\vert +(1+L)(\psi(t)+\eta(t)\omega(t)+m(t)),
\end{aligned}
\end{equation*}
which, by \eqref{condition-rho2}, implies that $\varphi_n(t)<\rho$ for all $t\in [0,T]$. \newline \noindent 
Let $t\in \Omega_n:=\{t\in [0,T]\colon P_n(x_n(t))\notin C(t,P_n(x_n(t)))\}$, where $\dot{x}_n(t)$ exists. Then, due to \cite[Lemma~4.4]{JV-Galerkin}, one has
\begin{equation*}
\begin{aligned}
\dot{\varphi}_n(t)&\leq \vert \dot{\zeta}(t)\vert +L\Vert P_n(\dot{x}_n(t))\Vert+\min_{y^{\ast}\in \Gamma_n(t)}\langle y^{\ast},P_n(\dot{x}_n(t))\rangle\\
&\leq \vert \dot{\zeta}(t)\vert +L(\psi(t)+\eta(t)\omega(t)+m(t))+\langle d_n(t),P_n(\dot{x}_n(t))\rangle\\
&=\vert \dot{\zeta}(t)\vert +L(\psi(t)+\eta(t)\omega(t)+m(t))+m(t)\langle d_n(t),-P_n(d_n(t))\rangle\\
&+\langle d_n(t),P_n(f_n(t)+\int_{0}^t g(t,s,P_n(x_n(s)))ds)\rangle\\
&\leq \vert \dot{\zeta}(t)\vert +L(\psi(t)+\eta(t)\omega(t)+m(t))+\Vert f_n(t)\Vert \\
&+\int_{0}^t \Vert g(t,s,P_n(x_n(s)))\Vert ds+m(t)\langle d_n(t),-P_n(d_n(t))\rangle \\
&\leq \vert \dot{\zeta}(t)\vert +L(\psi(t)+\eta(t)\omega(t)+m(t))+c(t)r(t)+d(t)+ \int_0^t \sigma(t,s)ds\\
&+\int_0^t \sigma(t,s)r(s)ds+m(t)\langle d_n(t),-P_n(d_n(t))\rangle\\
&\leq  \vert \dot{\zeta}(t)\vert +L(\psi(t)+\eta(t)\omega(t)+m(t))+\psi(t)+m(t)\langle d_n(t),-P_n(d_n(t))\rangle,
\end{aligned}
\end{equation*}
where we have used that \ref{HF}, \ref{Hg} and that $d_n(t)\in \Gamma_n(t)$ a.e. $t\in [0,T]$. Moreover, due to \ref{Hx4}, 
\begin{equation*}
\begin{aligned}
\langle d_n(t),-P_n(d_n(t))\rangle&=\langle d_n(t),d_n(t)-P_n(d_n(t))\rangle +\langle d_n(t),-d_n(t)\rangle\\
&\leq \langle d_n(t),d_n(t)-P_n(d_n(t))\rangle-\alpha_0^2\\
&=-\alpha_0^2. 
\end{aligned}
\end{equation*}
Thus, by using the above inequalities and the definition of $m(\cdot)$, we obtain that
\begin{equation*}
\begin{aligned}
\dot{\varphi}_n(t)\leq m(t)(\alpha_0^2 +\langle d_n(t),-P_n(d_n(t))\rangle)\leq 0,
\end{aligned}
\end{equation*}
which, by virtue of \ref{Hx1},  implies that 
\begin{equation*}
\begin{aligned}
    \varphi_n(t)&\leq \varphi_n(0)=d_{C(0,P_n(x_0))}(P_n(x_0))\\&=d_{C(0,P_n(x_0))}(P_n(x_0))-d_{C(0,x_0)}(x_0)\\
    &\leq (1+L)\Vert x_0-P_n(x_0)\Vert.
\end{aligned}
\end{equation*}}\end{claimproof} 
\begin{claim}{3}{  For all $t\in [0,T]$, $\lim_{n\to +\infty}\varphi_n(t)=0$.  }\end{claim}
 \begin{claimproof}{3}{ It follows directly from Claim 2 and  Lemma \ref{proyecciones}.   }\end{claimproof} 
\begin{claim}{4}{  The sequence $(P_n(x_n(t)))_n$ is relatively compact for all $t\in [0,T]$. }\end{claim}
 \begin{claimproof}{4}{ Let $\gamma=\alpha$ or $\gamma=\beta$ be either the Kuratowski or the Hausdorff measure of noncompactness. Fix $t\in [0,T]$  and let 
$$
s_n(t)\in \operatorname{Proj}_{C(t,P_n(x_n(t)))}(P_n(x_n(t))).
$$
Then, $s_n(t)\in (\rho+r(t)+\omega(t))\mathbb{B}$ and, due to Claim 3 and \ref{Hx3}, 
\begin{equation*} 
\begin{aligned}
\gamma(\{P_n(x_n(t))\colon n\in \mathbb{N}\})&=\gamma(\{s_n(t)\colon n\in \mathbb{N}\})\\
&\leq \gamma\left(C(t,(r(t)+\omega(t))\mathbb{B})\cap (\rho +r(t)+\omega(t))\mathbb{B}\right)=0, 
\end{aligned}
\end{equation*}
which proves the claim. }\end{claimproof} \\  \noindent Hence, we have verified all hypotheses of  Theorem \ref{main-compactness}, and there exists a subsequence $(x_{n_k})_k$ of $(x_n)$ converging strongly pointwisely to an absolutely continuous solution $x(\cdot)$ of \eqref{eq.22}. 
 \begin{claim}{5}{ For all $t\in [0,T]$, $x(t)\in C(t,x(t))$. }\end{claim}
 \begin{claimproof}{5}{ Fix $t\in [0,T]$. Then, as in the proof of Theorem \ref{main-compactness}, $P_k(x_{n_k}(t))\to x(t)$ for some subsequence $(x_{n_k})_k$ of $(x_{n})_n$. Hence, due to \ref{Hx1} and Claim 3, one has
\begin{equation*}
\begin{aligned}
d_{C(t,x(t))}(x(t))&=\limsup_{k\to +\infty}\left( d_{C(t,x(t))}(x(t))-\varphi_{n_k}(t)+\varphi_{n_k}(t)\right)\\
&\leq \limsup_{k\to +\infty}((1+L)\Vert x(t)-P_{n_k}(x_{n_k}(t))\Vert +\varphi_{n_k}(t))=0,
\end{aligned}
\end{equation*}
as claimed.  }
\end{claimproof} \\ 
Finally, by virtue of formula \eqref{eq.13} and Claim 5, $x(\cdot)$ is also a solution \eqref{Sweeping-Dif}. 
\end{proof}


\paragraph{Existence of solutions for the integro-differential sweeping process} We end this section with an existence result for Volterra sweeping process. 
\begin{equation}\label{Sweeping-Dif-Class}
\left\{
\begin{aligned}
\dot{x}(t)&\in -N(C(t);x(t))+F(t,x(t))+\int_{0}^t g(t,s,x(s))ds  \textrm{ a.e }  t\in [0,T],\\
x(0)&=x_0,
\end{aligned}
\right.
\end{equation}
where $C\colon [0,T] \rightrightarrows \H$ is set-valued with nonempty and closed values satisfying the following assumptions:
\begin{enumerate}
\item[\namedlabel{HAC}{$(\mathcal{H}_5)$}] There exists $\zeta\in \operatorname{AC}\left([0,T];\mathbb{R}_+\right)$ such that for all $s,t\in [0,T]$
\begin{equation*}
\sup_{x\in \H}|d(x,C(t))-d(x,C(s))|\leq |\zeta(t)-\zeta(s)|.
\end{equation*}

\item[\namedlabel{Halpha}{$(\mathcal{H}_{6})$}] There exist two constants $\alpha_0\in ]0,1]$ and $\rho\in ]0,+\infty]$ such that
\begin{equation*}
\begin{aligned}
  0<\alpha_0&\leq \inf_{x\in U_{\rho}\left(C(t)\right)}d\left(0,\partial d(x,C(t))\right) & \textrm{ a.e. } t\in [0,T],
  \end{aligned}
\end{equation*}
where $U_{\rho}\left(C(t)\right)=\left\{ x\in \H \colon 0<d(x,C(t))<\rho \right\}$ for all $t\in [0,T]$.

\item[\namedlabel{Hcomp}{$(\mathcal{H}_{7})$}] For all $t\in [0,T]$ the set $C(t)$ is ball-compact, that is, for every $r>0$ the set $C(t)\cap r\mathbb{B}$ is compact in $\H$.
\end{enumerate}
The following result can be proved similarly to Theorem \ref{main-sweeping}. When $\H$ is a finite-dimensional space, it extends the result from \cite{bouach2021nonconvex} to positively $\alpha$-far sets, a class which strictly includes prox-regular sets. Moreover, it extends the results from \cite{JV-alpha} by incorporating an integral perturbation.
\begin{theorem} 
Assume, in addition to \ref{HF} and \ref{Hg}, that  \ref{HAC}, \ref{Halpha} and \ref{Hcomp} hold. Then, the differential inclusion  \eqref{Sweeping-Dif-Class} admits at least one solution $x\in \operatorname{AC}([0,T];\H)$ such that $\Vert x(t)\Vert \leq r(t)+\omega(t)$ for all $t\in [0,T]$ and 
$$
\Vert \dot{x}(t)\Vert \leq \psi(t)+\eta(t)\omega(t)+m(t) \textrm{ for a.e. } t\in [0,T],
$$
where $m(t):=\dfrac{\vert \dot{\zeta}(t)\vert +\psi(t)}{\alpha_0^2}$ and $\omega(t):=\int_0^t m(s)\exp\left(\int_s^t \eta(\tau)d\tau\right)ds$.
\end{theorem}

\section{Reduction Technique for Volterra State-Dependent Sweeping Processes}\label{Reduction-sp}
In this section, we provide a reduction technique for the Volterra state-dependent sweeping process \eqref{Sweeping-Dif}.  This result will be used in Section \ref{Control-problem} to prove the existence of solutions of optimal controls. 

In order to obtain the reduction result of this section, we strengthen $(\mathcal{H}_3^F)$ to the condition:  \vspace{0.5mm}
\begin{enumerate}
    \item[\namedlabel{H3Fs}{$(\mathcal{H}_3^{F_s})$}] There exist two nonnegative integrable functions $c,d$ such that for all $x\in \H$ and a.e. $t\in [0,T]$,  $\|F(t,x)\| \leq  c(t)\Vert x\Vert +d(t)$. 
    \end{enumerate} 
The proof of the next result is based on ideas from \cite{Thibault2003}. As in the proof of Theorem \ref{main-sweeping}, we will show that the differential inclusion \eqref{Sweeping-Dif} is related to the following unconstrained differential inclusion:
\begin{equation}\label{reducido-dif-eq} 
\left\{
\begin{aligned}
&\dot{x}(t)\in -\nu(t)\partial  d_{C(t,x(t))}(x(t))+F(t,x(t))+\int_0^t g(t,s,x(s))ds & \textrm{a.e. } t\in I;\\
&x(0)=x_0\in C(0,x_0),
\end{aligned}
\right.
\end{equation}
where $\nu\colon [0,T]\to \mathbb{R}_+$ is defined as
$$
\nu(t):=\frac{\vert \dot{\zeta}(t)\vert}{1-L}+\frac{1+L}{1-L}(\eta(t)R(t)+\varepsilon(t)),  
$$
$\eta(\cdot)$, $\varepsilon(\cdot)$ are defined in Theorem \ref{existencia-finito} and 
\begin{equation*}
\begin{aligned}
R(t)&:=\Vert x_0\Vert \exp\left(\frac{2}{1-L}\int_0^t \eta(s)ds\right)\\
    &+\frac{1}{1-L}\int_0^t  (\vert \dot{\zeta}(s)\vert+2\varepsilon(s))\exp\left(\frac{2}{1-L}\int_s^t \eta(\tau)d\tau\right)ds.
\end{aligned}
\end{equation*}

The result is the following:
\begin{proposition}\label{reduction-thm}
    Suppose, in addition to \ref{H1F}, \ref{H2F}, \ref{H3Fs}, that \ref{Hx1} and \ref{Hx2} hold. Then, any solution $x(\cdot)$ of \eqref{Sweeping-Dif} is a solution of unconstrained differential inclusion \eqref{reducido-dif-eq}. Reciprocally, any solution $x(\cdot)$ of \eqref{reducido-dif-eq} satisfying $x(t)\in C(t,x(t))$ for all $t\in [0,T]$ is a solution of \eqref{Sweeping-Dif}. 
\end{proposition}
\begin{proof} 
The second assertion follows directly from \eqref{eq.13}. To prove the first one, let $x$ be a solution of \eqref{Sweeping-Dif} and let $f(\cdot)$ be a measurable selection of $F(t,x(t))$ such that 
$$
\dot{x}(t)\in -N(C(t,x(t));x(t))+f(t)+\int_0^t g(t,s,x(s))ds \textrm{ for a.e. } t\in [0,T].
$$
Then, for a.e. $t\in [0,T]$, one has
$$
\dot{x}(t)\in -N(C(t,x(t));x(t))+z(t),
$$
where $z(t):=f(t)+\int_{0}^t g(t,s,x(s))ds$. Define, for all $t\in [0,T]$, 
$$
v(t):=\int_0^t z(s)ds, D(t):=C(t,x(t))-v(t) \textrm{ and } y(t):=x(t)-v(t).
$$
Hence, for a.e. $t\in [0,T]$, one has
\begin{equation*}
\dot{y}(t)\in -N(D(t);y(t)). 
\end{equation*}
Moreover,  according to \ref{Hx1} and \ref{Hx2}, it is routine to verify that $D(t)$ is normally regular for all $t\in [0,T]$ and that for all $x\in \mathcal{H}$ and $s<t$, one has  {\small 
\begin{equation*}
\begin{aligned}
    \vert d_{D(t)}(x)-d_{D(s)}(x)\vert &\leq \vert \zeta(t)-\zeta(s)\vert+L\Vert x(t)-x(s)\Vert +\Vert v(t)-v(s)\Vert \\
    &\leq \vert \zeta(t)-\zeta(s)\vert+L\Vert x(t)-x(s)\Vert +\int_s^t\Vert f(s)\Vert ds\\
    &+\int_s^t \int_0^r \Vert g(r,\tau,x(\tau))\Vert d\tau dr \\
    &\leq \int_s^t \vert \dot{\zeta}(\tau)\vert d\tau+L\int_s^t \Vert \dot{x}(\tau)\Vert d\tau +\int_s^t (c(s)\Vert x(s)\Vert +d(s))ds \\
    &+\int_s^t \int_0^r \sigma(r,\tau)(1+\Vert x(\tau)\Vert )d\tau dr,
    \end{aligned}
\end{equation*}}
where we have used \ref{H3Fs} and \ref{Hg}$(b)$.  Therefore, by virtue of \cite[Proposition~2.1]{Thibault2003}, we obtain that
\begin{equation}\label{sweeping-y}
\dot{y}(t)\in -\theta(t)\partial d_{D(t)}(y(t)) \textrm{ a.e. } t\in [0,T],
\end{equation}
where 
$$
\theta(t):=\vert \dot{\zeta}(t)\vert+L\Vert \dot{x}(t)\Vert +c(t)\Vert x(t)\Vert +d(t)+\int_0^t \sigma(t,s)(1+\Vert x(s)\Vert )ds.
$$
The above inclusion implies that, for a.e. $t\in [0,T]$, one has
\begin{equation*}
\begin{aligned}
\Vert \dot{x}(t)\Vert &\leq \Vert \dot{y}(t)\Vert +\Vert\dot{v}(t)\Vert \\ 
&\leq \theta(t)+\Vert z(t)\Vert\\
&\leq \theta(t)+\Vert f(t)\Vert +\int_0^t \Vert g(t,s,x(s))\Vert ds\\
&\leq \theta(t)+c(t)\Vert x(t)\Vert +d(t) +\int_0^t \sigma(t,s)(1+\Vert x(s)\Vert )ds,
\end{aligned}
\end{equation*}
which gives that for a.e. $t\in [0,T]$, one has
\begin{equation}\label{a-priori-c}
\Vert \dot{x}(t)\Vert \leq \frac{\vert \dot{\zeta}(t)\vert }{1-L}+2\frac{c(t)}{1-L}\Vert x(t)\Vert +2\frac{d(t)}{1-L}+2\int_0^t \frac{\sigma(t,s)}{1-L}(1+\Vert x(s)\Vert )ds.
\end{equation}
From  Lemma \ref{Gronwall-2}, we obtain that, for all $t\in [0,T]$, one has
\begin{equation*}
\begin{aligned}
    \Vert x(t)\Vert &\leq R(t):=\Vert x_0\Vert \exp\left(\frac{2}{1-L}\int_0^t \eta(s)ds\right)\\
    &+\frac{1}{1-L}\int_0^t  (\vert \dot{\zeta}(s)\vert+2\varepsilon(s))\exp\left(\frac{2}{1-L}\int_s^t \eta(\tau)d\tau\right)ds.
    \end{aligned}
\end{equation*}
 Moreover, from inequality \eqref{a-priori-c}, we obtain that for a.e. $t\in [0,T]$, one has  
\begin{equation*}
\begin{aligned}
\Vert \dot{x}(t)\Vert &\leq  
 \frac{\vert \dot{\zeta}(t)\vert }{1-L}+2\frac{c(t)}{1-L}R(t) +2\frac{d(t)}{1-L}+2\int_0^t \frac{\sigma(t,s)}{1-L}(1+R(s) )ds\\
 &\leq \varrho(t):= \frac{\vert \dot{\zeta}(t)\vert }{1-L}+\frac{2}{1-L}\left(\eta(t)R(t)+\varepsilon(t)\right).
 \end{aligned}
 \end{equation*} 
On the other hand, due to \ref{H3Fs} and \ref{Hg}$(b)$, we get that for a.e. $t\in [0,T]$, one has 
\begin{equation*}
    \begin{aligned}
      &  \Vert \dot{x}(t)-z(t)\Vert=\Vert \dot{y}(t)\Vert \\
        &\leq \theta(t)=\vert \dot{\zeta}(t)\vert+L\Vert \dot{x}(t)\Vert +c(t)\Vert x(t)\Vert +d(t)+\int_0^t \sigma(t,s)(1+\Vert x(s)\Vert )ds\\
        &\leq \vert \dot{\zeta}(t)\vert+L\varrho(t) +c(t)R(t) +d(t)+\int_0^t \sigma(t,s)(1+R(s) )ds\\
        &\leq   \vert \dot{\zeta}(t)\vert +L\rho(t)+\eta(t)R(t)+\varepsilon(t)\\
    &=\nu(t):=\frac{\vert \dot{\zeta}(t)\vert}{1-L}+\frac{1+L}{1-L}(\eta(t)R(t)+\varepsilon(t)).
    \end{aligned}
\end{equation*}
Finally, since the sets $C(t,x(t))$ are uniformly subsmooth and \eqref{sweeping-y} holds, from \eqref{normally-dist}, we obtain that for a.e. $t\in [0,T]$ one has
\begin{equation*}
\begin{aligned}
\frac{\dot{x}(t)-z(t)}{\nu(t)}&\in -N(C(t,x(t));x(t))\cap \mathbb{B}=-\partial d_{C(t,x(t))}(x(t)),
\end{aligned}
\end{equation*}
which proves the result.
\end{proof}

\section{Optimal control of Volterra Sweeping Processes}\label{Control-problem}
In this section, we study the existence of solutions for an optimal control problem governed by state-dependent integro-differential sweeping processes in finite-dimensional spaces. We prove the existence of solutions for the controlled problem without any convexity assumptions on the integral part of the dynamics.

Given a running cost function $\varphi\colon [0,T]\times \mathbb{R}^{2s}\times \mathbb{R}^m\to \overline{\mathbb{R}}$ and a terminal cost function $\ell \colon \mathbb{R}^{2s}\to \overline{\mathbb{R}}$, we consider the following optimal control problem:
\begin{equation}\label{contro_probl}
    \begin{aligned} 
        \min \quad  \ell(x(0),x(T))+\int_{0}^T \varphi(s,x(s),\dot{x}(s),u(s))ds,
    \end{aligned}
\end{equation}
subject to 
\begin{equation*}
\left\{
    \begin{aligned} 
       &\dot{x}(t)\in -N_{C(t,x(t)}(x(t))+F(t,x(t))+\int_{0}^{t} g(t,s,x(s),u(s))ds \textrm{ a.e. } t\in [0,T],\\
        & x(0)\in C(0,x_0),\, u(t) \in U  \textrm{ a.e. } t\in [0,T] \quad 
         (x(0),x(T)) \in S,\\
    \end{aligned} \right.
\end{equation*}
where $u\colon [0,T]\to \mathbb{R}^m$ is a measurable control function, $U\subset \mathbb{R}^m$ is a bounded, closed and  convex set
 and $S$ is a compact set of $\mathbb{R}^{2s}$. Here $\varphi$ is a normal integrand (see, e.g., \cite[Definition~14.27]{MR1491362}).

The following result asserts the existence of optimal controls for the optimization problem \eqref{contro_probl}. Its main feature is that convexity in the integral part of the dynamics is not required for the existence of solutions.
\begin{theorem} Suppose, in addition to \ref{H1F}, \ref{H2F}, and    \ref{H3Fs}, that \ref{Hx1}, \ref{Hx2}, and the following assumptions hold:
    \begin{itemize}
        \item[(i)]   For every $u\in U$, the map $(t,s,x)\mapsto g(t,s,x, u)$ satisfies \ref{Hg}.
\item[(ii)] For all $t\in [0,T]$ the integral mapping \begin{equation}\label{integralmapping}
    	\mathcal{G}(x,u)(t):=\int_{0}^{t} {g(t,s,x(s),u(s))}ds
\end{equation} 
is norm-weak continuous, i.e., 
 $\lim_{n\to +\infty}\mathcal{G}(x_n,u_n)(t)=\mathcal{G}(x,u)(t)$ for all $t\in [0,T]$,  whenever $x_n\to x$ in $C([0,T];\mathbb{R}^s)$ and $u_n \rightharpoonup u$ in $L^2([0,T];\mathbb{R}^s)$.
\item[(iii)] The terminal cost function $\ell$  is lsc.
\item[(iv)] The running cost function $\varphi$ is a normal integrand such that  $\varphi(s,x, \cdot,\cdot)$ is convex for all $(s,x) \in [0,T]\times \mathbb{R}^{s}$. Moreover,  there exists $l \in L^1([0,T],\mathbb{R})$ such that 
    \begin{equation*}
    	\varphi(s,x,y,u) \geq  l(s),\text{ for all } (s,x,y,u) \in [0,T]\times \mathbb{R}^{2s}\times U.
    \end{equation*}
    \end{itemize}
    Then, if the value of the optimal control problem \eqref{contro_probl} is finite, it admits at least one optimal solution.
\end{theorem}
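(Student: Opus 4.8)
The plan is to apply the direct method of the calculus of variations. First I would fix a minimizing sequence $(x_n,u_n)_n$ of admissible pairs for \eqref{contro_probl}, so that the costs converge to the (finite) optimal value; passing to a subsequence, the costs may be assumed uniformly bounded. Since $U$ is bounded, closed, and convex, the admissible controls form a bounded, convex, strongly closed — hence weakly compact — subset of $L^2([0,T];\mathbb{R}^m)$, so along a subsequence $u_n\rightharpoonup u$ weakly and Mazur's lemma gives $u(t)\in U$ a.e. On the state side, \ref{H3Fs} together with the hypotheses of Theorem \ref{main} (trivially available in finite dimensions) forces every admissible trajectory to satisfy the a priori bounds $\Vert x_n(t)\Vert\leq r(t)$ and $\Vert\dot{x}_n(t)\Vert\leq\psi(t)$ with $\psi\in L^1$, in the sense of \eqref{acotamiento}. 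Thus $(x_n)_n$ is uniformly bounded and equi-absolutely continuous, so Arzel\`a--Ascoli yields $x_n\to x$ uniformly for some $x\in\operatorname{AC}([0,T];\mathbb{R}^s)$, while the Dunford--Pettis criterion (or Lemma \ref{compactness}) gives $\dot{x}_n\rightharpoonup\dot{x}$ in $L^1$. Because $x_n(0)\to x(0)$, $x_n(T)\to x(T)$, and $C$ is compact, the endpoint constraint $(x(0),x(T))\in C$ passes to the limit.

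Next I would verify that $(x,u)$ solves the state inclusion. Writing $w_n(t):=\dot{x}_n(t)-\mathcal{G}(x_n,u_n)(t)\in F(t,x_n(t))$, the norm--weak continuity hypothesis (ii) gives $\mathcal{G}(x_n,u_n)(t)\to\mathcal{G}(x,u)(t)$ for every $t$, since $x_n\to x$ uniformly and $u_n\rightharpoonup u$ weakly; this is precisely the step that dispenses with any convexity requirement on the control-dependence of $g$. As the integral term is dominated (by \ref{Hg}), this convergence also holds in $L^1$, whence $w_n\rightharpoonup w:=\dot{x}-\mathcal{G}(x,u)$ in $L^1$. The Convergence Theorem (see \cite[Proposition~5]{Aizicovici}), applied exactly as in the proof of Theorem \ref{main-compactness} and relying on \ref{H2F} together with the convexity of the values of $F$, then yields $w(t)\in F(t,x(t))$ a.e. Therefore $\dot{x}(t)\in F(t,x(t))+\mathcal{G}(x,u)(t)$ a.e., so $(x,u)$ is admissible.

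It remains to show optimality, that is, that the cost cannot increase in the limit. Since $\ell$ is lower semicontinuous and $(x_n(0),x_n(T))\to(x(0),x(T))$, we have $\ell(x(0),x(T))\leq\liminf_n\ell(x_n(0),x_n(T))$. For the running cost I would invoke a lower semicontinuity theorem for integral functionals whose normal integrand is convex in the weakly convergent arguments: with $x_n\to x$ strongly (uniformly), $\dot{x}_n\rightharpoonup\dot{x}$ and $u_n\rightharpoonup u$ weakly, the convexity of $\varphi(s,x,\cdot,\cdot)$ in $(\dot{x},u)$ and the integrable lower bound $\eta$ give
\[
\int_{0}^T \varphi(s,x(s),\dot{x}(s),u(s))\,ds\;\leq\;\liminf_{n\to+\infty}\int_{0}^T \varphi(s,x_n(s),\dot{x}_n(s),u_n(s))\,ds.
\]
Adding the two inequalities shows that the admissible pair $(x,u)$ attains a cost not exceeding the optimal value, hence is an optimal solution.

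The main obstacle is this last lower semicontinuity step: one must reconcile the strong convergence of the state with the merely weak convergence of the velocity and the control, which is the setting of Ioffe's (or Balder's) theorem for normal integrands convex in the weakly convergent variables. Verifying its hypotheses — the normal-integrand property of $\varphi$, its convexity in $(\dot{x},u)$, and the uniform lower bound $\eta\in L^1$ ensuring the functionals are well defined and that no mass escapes — is where the genuine work lies. By contrast, the compactness extraction and the passage to the limit in the constraint are routine, the latter being rendered painless by the norm--weak continuity assumption (ii) on $\mathcal{G}$, which is exactly what allows the dynamics to be treated without a convexity hypothesis.
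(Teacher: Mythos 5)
Your proposal is correct and follows essentially the same route as the paper: a minimizing sequence, a priori bounds from \ref{H3Fs} plus Gronwall giving compactness of the trajectories (Lemma \ref{compactness}/Arzel\`a--Ascoli), weak $L^2$-compactness of the controls, the norm--weak continuity of $\mathcal{G}$ together with the convexity and upper semicontinuity of $F$ to pass to the limit in the inclusion, and Balder's lower semicontinuity theorem for the running cost. The only difference is that you spell out details the paper leaves implicit (Mazur's lemma for $u(t)\in U$ a.e., the explicit splitting $w_n=\dot{x}_n-\mathcal{G}(x_n,u_n)$), which is harmless.
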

\begin{proof}
 Let us consider a minimizing sequence  $(x_k, u_k) $ of the optimization problem \eqref{contro_probl}. Then, by virtue \ref{H3Fs} and Proposition \ref{reduction-thm}, it follows that  $x_k(t)\in C(t,x_k(t))$ for all $t\in I $ and 
 {\small
 \begin{equation*}
\left\{
\begin{aligned}
&\dot{x}_k(t)\in -\nu(t)\partial  d_{C(t,x_k(t))}(x_k(t))+F(t,x_k(t))+\int_0^t g(t,s,x_k(s))ds  \textrm{ a.e. } t\in I;\\
&x_k(0)=x_0\in C(0,x_0),
\end{aligned}
\right.
\end{equation*}}
\hspace{-1.5mm }for some $\nu\in L^1([0,T];\mathbb{R}_+)$. Hence, we can obtain that the sequence $(x_k)$ satisfies condition \eqref{acotamiento}.   Moreover, by the compactness of $S$ we can assume that $(x_k(0),x_k(T) ) \to (x_0,x_T) \in S$. Then,  we can suppose (up to a subsequence) that $x_k$ satisfies the conclusions of   Lemma \ref{compactness} for some absolutely continuous function $x(\cdot)$ with $ (x(0),x(T) )= (x_0,x_T) $ and 
 \begin{equation}\label{inclusion-f}
 x(t)\in C(t,x(t)) \textrm{ for all } t\in [0,T].
 \end{equation}
 Furthermore, since the set $U$ is bounded, the sequence $(u_k)$ is  bounded in $L^2([0,T];\mathbb{R}^m)$. Thus, we can assume that $u_k \rightharpoonup u\in L^2([0,T];\mathbb{R}^m)$. Hence, by using similar arguments to those given in \eqref{estimation001}, we obtain that  $(x,u)$ satisfies the differential inclusion \eqref{reducido-dif-eq}. Then, again by Proposition \ref{reduction-thm} and inclusion \eqref{inclusion-f}, we obtain that $x(\cdot)$ solves \eqref{Sweeping-Dif}. \newline 
  Finally, by the lower semicontinuity of $\ell$ and the  lower semicontinuity result from \cite[Theorem 2.1]{bal}, we conclude that $(x,u)$ attains the minimum on the control problem \eqref{contro_probl}, which ends the proof.
\end{proof}
The following result provides an example where the integral mapping \eqref{integralmapping}  is norm-weak continuous. 
\begin{example}[Linear control variable]
Let us consider the map $g(s,x,u):=A(s,x)u+ b(s,x)$, where  $A\colon [0,T] \times \mathbb{R}^s \to \mathcal{M}_{n\times m} $, $b\colon [0,T] \times \mathbb{R}^s \to  \mathbb{R}^s$ are continuous functions. Then, the integral mapping \eqref{integralmapping} is norm-weak continuous.  
\end{example}


\noindent \textbf{Acknowledgments}\\
The authors were supported by ANID Chile under grants Fondecyt Regular  N$^{\circ}$ 1240120 (P. P\'erez-Aros and E. Vilches), Fondecyt Regular N$^{\circ}$ 1220886 (P. P\'erez-Aros and E. Vilches), Fondecyt Regular N$^{\circ}$ 1240335 (P. P\'erez-Aros),  Proyecto de Exploraci\'on 13220097 (P. P\'erez-Aros and E. Vilches),  CMM BASAL funds for Center of Excellence FB210005 (P. P\'erez-Aros and E. Vilches), Project ECOS230027 (P. P\'erez-Aros and E. Vilches), MATH-AMSUD 23-MATH-17 (P. P\'erez-Aros and E. Vilches).

\end{document}